\newtheorem{thm}{Theorem}[section]
\newtheorem{cor}[thm]{Corollary}
\newtheorem{lemma}[thm]{Lemma}
\theoremstyle{remark}
\theoremstyle{definition}
\newtheorem{defn}[thm]{Definition}
\numberwithin{equation}{section}
\theoremstyle{definition}
\newtheorem{exmp}[thm]{Example}
\begin{document}
%
\title{Levels of Ultrafilters with Extension Divisibilities}
\author{Salahddeen Khalifa\\Department of Mathematics and Computer Sience\\University of Missouri- St. Louis\\St.Louis, MO 63121 U.S.A\\e-mail: skkf57@mail.umsl.edu}
%
%
\maketitle
	\begin{abstract} 
		To work more accurately with elements of the semigroup of the Stone Cech compactification $ (\beta N , .) $ of  the discrete semigroup of natural numbers N under multiplication . We divided these elements into ultrafilters which are on finite levels and ultrafilters which are not on finite levels. For the ultrafilters that are on finite levels we prove that any element is irreducible or product of irreducible elements and all elements on higher levels are $ {\tilde \mid } $ -divided by some elements on lower levels. We characterize ultrafilters that are not on finite levels and the effect of $ {\tilde \mid } $ -divisibility on the ultrafilters which are not on finite levels. 
	\end{abstract} 
\section{Introduction} 
If we given the discrete semi-group (N,.) of the discrete space of natural numbers with multiplication , then $ (\beta N, .)  $ is a semigroup of the Stone-Cech compactification $ \beta N $ of the discrete space N with operation(.) that is defined as: 
For any $ x , y \in \beta N , A \in x . y $ iff $ { \{n \in N : \,   A / n \in y} \} \in x $ where \newline $  A / n = {\{m \in N : m n \in A\} }.$ \newline  The topology on $ \beta N $ has  base that contains the basic (clopen) sets that are \newline defined as : for any $ A \subseteq N$ ,$\overline A = {\{ x \in \beta N : A \in x\} }. $ For each $ n \in N $ the principal ultrafilter is defined by element ${\{n}\}$. $ N^* = {\beta N} - N $ and $ A^* = {\overline A} - A $ for $ A \subseteq N $. $\mid\beta N\mid =2^c$ where $ c=\mid\mathbb{R}\mid$ .\newline  The collection of \textbf{upward closed} subsets of N is $ \mu = {\{A \subseteq N : A = A \uparrow \}} $ where $ A \uparrow = {\{ n \in N : \exists a \in A , a \mid n \} } $ and the collection of \textbf{downward closed} subsets of N is $ \nu = {\{ A \subseteq N : A = A \downarrow \}} $ where $ A \downarrow = {\{ n \in N : \exists a \in A , n \mid a \}} .\newline $ For every $ f : N \rightarrow N $ there is a unique continuous extension function \newline $ \tilde f : \beta N \rightarrow \beta N $ such that for every $ x \in \beta N , \tilde f (x) = {\{ A \subseteq N : f ^{-1} (A)  \in x }\} $. If $ A\in x $, then $ f(A)\in \tilde f(x) $ and if $ B\in\tilde f(x) $, then $ f^{-1}(B)\in x $.\newline For $ x\in\beta N $,if $ A\in x $, then $x\upharpoonright A ={\{ B \cap A : B \in x \}}  $ is an ultrafilter on A. Let $ x,y\in\beta N$. Then: $ y $ is \textbf{left-divisible} by $ x$, $ x\mid_l y $ if there is $ z\in\beta N$ such that $ y=zx $. $y$ is \textbf{right-divisible} by $ x, x\mid_r y$, if there is $ z\in\beta N$ such that $y=xz$. $ y $ is \textbf{mid-divisible} by $ x, x\mid_m y$, if there are $ z,w\in\beta N$ such that $ y=zxw$. $ y $ is \textbf {$ \tilde\mid$-divisible} by $ x $, $ x \tilde \mid y $ if for all $ A \subseteq N , A \in x $  implies $ \mid A = {\{n \in N : \exists a \in A , a \mid n \} } \in y $. When $x=n \in N$, we write $ n\mid y, (y=nz, z\in\beta N) $. 
\begin{lemma} 
([1]Theorem 4.15)Assume $ x, y \in \beta N $ and $ A \subseteq N. $ Then $ A \in x y $ if and only if there exist $ B \in x $ and an indexed family $ <C_n>_{n\in B} $ in $ y $ such that $ \bigcup_ {n\in B} n C_n \subseteq  A. $ 
\end{lemma}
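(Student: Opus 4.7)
The plan is to prove the biconditional directly from the definition given in the introduction, namely that $A\in xy$ means $\{n\in N:A/n\in y\}\in x$, where $A/n=\{m\in N:mn\in A\}$. Both directions will follow by choosing the natural candidates for $B$ and the family $\langle C_n\rangle$, and then invoking the filter properties of $y$ and $x$.

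For the forward direction, suppose $A\in xy$. The natural choice is to set $B:=\{n\in N:A/n\in y\}$, which lies in $x$ by the very definition of the product, and to let $C_n:=A/n$ for each $n\in B$. Then $C_n\in y$ automatically, and the crucial verification is that $nC_n\subseteq A$: this is essentially tautological, since $m\in A/n$ means exactly $nm\in A$. Taking the union over $n\in B$ yields $\bigcup_{n\in B}nC_n\subseteq A$.

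For the reverse direction, assume we are given $B\in x$ and a family $\langle C_n\rangle_{n\in B}$ with $C_n\in y$ and $\bigcup_{n\in B}nC_n\subseteq A$. The goal is to show $\{n\in N:A/n\in y\}\in x$, and the natural strategy is to prove that $B$ is a subset of this set and then use that ultrafilters are upward closed under inclusion of subsets of $N$. For $n\in B$, the hypothesis $nC_n\subseteq A$ translates to $C_n\subseteq A/n$; since $C_n\in y$ and $y$ is a filter, it follows that $A/n\in y$. Hence $B\subseteq\{n\in N:A/n\in y\}$, and because $B\in x$ we conclude $\{n\in N:A/n\in y\}\in x$, i.e.\ $A\in xy$.

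There is no real obstacle here; the proof is essentially an unpacking of the definitions. The only point demanding any care is the observation that $C_n\subseteq A/n$ in the reverse direction, which requires translating the containment $nC_n\subseteq A$ back through the definition of $A/n$, and the use of the upward-closure property of the ultrafilters $x$ and $y$ to pass from the given sets to the ones demanded by the definition of multiplication.
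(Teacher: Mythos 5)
Your proof is correct. The paper itself gives no argument for this lemma --- it simply cites it as Theorem 4.15 of Hindman--Strauss --- and your two-directional unpacking of the definition $A\in xy \iff \{n\in N : A/n\in y\}\in x$, taking $B=\{n : A/n\in y\}$ and $C_n=A/n$ in one direction and using $nC_n\subseteq A \iff C_n\subseteq A/n$ together with upward closure of the ultrafilters in the other, is exactly the standard proof of that cited result.
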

\begin{lemma}
	([3]Lemma 1.4)The following conditions are equavalent. \newline 
	(a) $ x \tilde \mid y $ \newline 
	(b) $ x \, \cap\mu \subseteq y \cap \mu $ \newline 
	(c) $ y \, \cap \nu \subseteq x \cap \nu $   
		\begin{lemma} 
		([4]Lemma 2.1)Let $ x \in \beta N $, $ A\in x $ , and $ f : N \rightarrow N $ \newline  
		(a) If $ f(a) \mid a $ for all $ a \in A $ , then $ \tilde f (x) \tilde \mid x $ \newline 
		(b) If $ a \mid f (a) $ for all $ a \in A , $ then $ x \tilde \mid \tilde f (x) $ \newline \newline
		In Lemma 1-3, if $ A \in x $ , in order to determine $ \tilde f (x) $ it is enough to know values of $ f (a) $ for $ a \in A $ , i.e we will sometimes define the function only on a set in x.  
	\end{lemma} 
		  An element ${ p\in \beta N -\{1}\} $ is prime with respect to $ \mid_l,\mid_r,\mid_m$ or $\tilde \mid  $ divisibility if it is divisible only by 1 and itself. We call p $ \in \beta N $  irreducible in $ X \subseteq \beta N $ if it  can not be represented in the form $ p = x y $ for $ x , y \in X -\{1\}.$  
		\begin{lemma}
			(a) $ p\in\beta N$ is prime with respect to $ \mid_l,\mid_r$ and $ \mid_m$ divisibilities if and only if $p$ is irreducible.\newline
			(b) ([4]Lemma 2.2) $ p \in \beta N $ is prime with respect to $\tilde\mid$-divisibility if and only if $ p \in \overline P $ (P the set of prime numbers in N). \newline 
			(c) ([2]Lemma7.3) If $ p \in \beta N $ and $ p \in\overline P $ , then p is irreducible in $\beta N $. \newline (d) If $ p\in\beta N$ is prime with respect to $\tilde\mid$- divisibility. Then $p$ is prime with respect to $\mid_l,\mid_r$ and $\mid_m$ divisibilities.
		\begin{proof}
		(a) This follows immediately from definition of prime and irreducible.  \newline
		(d)By (a),(b),(c)
		\end{proof}
		\end{lemma} 
	\end{lemma}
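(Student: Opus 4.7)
Since parts (b) and (c) are imported from earlier papers and may be treated as black boxes, my plan is to give a direct definition chase for (a) and then obtain (d) as a short three-step consequence.

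For (a), I would unpack the definitions of \emph{prime} and \emph{irreducible}. In the ``irreducible $\Rightarrow$ prime'' direction, suppose $x\mid_{l} p$, so $p=zx$ for some $z\in\beta N$; if $x\ne 1$, irreducibility forbids both $z$ and $x$ from lying in $\beta N\setminus\{1\}$, which forces $z=1$ and hence $x=p$. The $\mid_{r}$ and $\mid_{m}$ cases are parallel (for $\mid_{m}$, the factorization $p=zxw$ with $x\ne 1$ must collapse the remaining factors to $1$). For the converse, a factorization $p=xy$ with $x,y\in\beta N\setminus\{1\}$ witnesses both $y\mid_{l} p$ and $x\mid_{r} p$; primality in the left and right senses immediately drags $x$ and $y$ into $\{1,p\}$, contradicting the assumption $x,y\ne 1$.

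For (d), the plan is a clean chain of the first three parts: if $p$ is $\tilde\mid$-prime, then (b) places $p\in\overline P$, (c) promotes this to irreducibility of $p$ in $\beta N$, and (a) converts irreducibility into primality with respect to each of $\mid_{l}$, $\mid_{r}$, and $\mid_{m}$. No ultrafilter-specific machinery is invoked at this last step beyond what (a), (b), (c) supply.

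The one subtle point will be the converse of (a) in the degenerate case where both $x$ and $y$ collapse to $p$ itself, i.e.\ when $p$ is a nontrivial multiplicative idempotent. Here I would observe that $p=p\cdot p$ with $p\ne 1$ is itself a forbidden factorization in the definition of irreducibility (which restricts factors only to lie in $\beta N\setminus\{1\}$, not to be distinct from $p$), so there is nothing further to check; equivalently, one notes that any such idempotent would admit additional nontrivial left divisors and therefore fail to be $\mid_{l}$-prime in the first place. Beyond this bookkeeping, the proof is a pure unwrapping of definitions combined with the cited results.
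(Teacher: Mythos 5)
Your handling of (d) coincides with the paper's own proof (which is literally ``by (a),(b),(c)''), and your ``irreducible $\Rightarrow$ prime'' direction of (a) is a correct unwinding of the definitions; the only step worth making explicit is that in the $\mid_m$ case the grouping $p=(zx)w$ forces $w=1$ or $zx=1$, and $zx=1$ in $\beta N$ forces $z=x=1$ because $1$ is the only invertible element, so the factorization really does collapse. Since the paper's entire proof of (a) is ``follows immediately from the definitions,'' your proposal is already more explicit than the source.

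The genuine gap is in ``prime $\Rightarrow$ irreducible.'' From $p=xy$ with $x,y\neq 1$ you correctly conclude $x,y\in\{1,p\}$, but that does not contradict $x,y\neq 1$: it leaves the case $x=y=p$, i.e.\ $p=p\cdot p$. Your first way of dismissing this is circular --- saying that $p=p\cdot p$ is ``a forbidden factorization in the definition of irreducibility'' merely restates that a nontrivial idempotent is \emph{not} irreducible, which is exactly what you must exclude for a prime $p$; you still owe an argument that a prime cannot be idempotent. Your second way is an unproved assertion that every nontrivial multiplicative idempotent of $\beta N$ has a left divisor other than $1$ and itself. Nontrivial idempotents do exist in $(\beta N,\cdot)$ by Ellis's theorem, and ruling out that some idempotent has only the trivial divisors is real work: for $n\in N$ one has $n\mid_l p$ iff $nN\in p$, and nothing forces an idempotent to contain a set $nN$ with $n\geq 2$, so the required extra divisor would have to be nonprincipal and must be exhibited somehow. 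Until this case is closed, the equivalence in (a) is not established. Note, however, that (d) is unaffected, since the chain (b), (c), then (a) uses only the ``irreducible $\Rightarrow$ prime'' direction, which you did prove.
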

\section {Ultrafilters on finite levels in $ \beta N $}  
\theoremstyle{definition}
In this section we consider the set of ultrafilters which are elements in all basic open sets $\overline L_i ,i=0,1,2,......$ where $\ L_0=\{1\} $ and $\ L_n=\{a_1a_2......a_n :a_1,a_2,......,a_n\in P\}$, P the set of prime numbers in N.
\begin{defn}
	
([4] definition 2.4)Let P be the set of prime numbers in N, and 
let \newline  \, \, \, \, $ L_0 = \{1\} $ \newline 
$ L_1 = \{a : a \in P \} $ \newline
$ L_2 = \{a_1 a_2 : a_1 , a_2 \in P \} $ 
\\
. 
\\ 
.
\\
$ L_n = \{a_1 a_2 ......a_n : a_1 , a_2, ....,a_n \in P\} $ \newline . \\ .\\
Then, the ultrafilter $ x$  is called on "finite level" if it is in exactly one of the follwing sets: $ \overline L_i  , i = 0 , 1, 2 ,....... $ where $ \overline L_i = \{y \in \beta N : L_i \in y \} $ 
\begin{defn}  
	Let $ A \subseteq P $. We denote $ A ^ n = \{a ^ n : a \in A \}$  and \newline $ A ^ {(n)} = \{ a_1 a_2 ..... a_n  :$ all $ a_i \in A \,  are \,  mutually \, prime \,  numbers \} $  

\begin{lemma} 
(1) $ \overline L_i \cap \overline L_j = \emptyset $  for any $ \, i \neq j \newline $ 
(2) $ \overline L_i = \overline {\rm P^i} \cup \overline {\rm P ^ {i - 1} P} \cup ..... \cup \overline {\rm P ^ {(i)}} $ for any $ i \geq 2 $ , and  $\overline {\rm P ^i}  ,  \overline {\rm P ^ {i-1} P}  , ......,   \overline {\rm P ^ {(i)}} $ are disjoint\newline 
(3) $ \overline L_i $ has $2 ^c$ elements for all $ i \neq 0$ \newline 
(4) All $ \overline {\rm P ^i}  , \overline {\rm P ^ {i - 1}P} $ , ....., $ \overline  { \rm P ^ {(i)}} $  has $ 2 ^ c $ elements \newline  
(5) Any principal ultrafilter in $ \beta N $ is ultrafilter on finite level. i.e $ N\subset \bigcup\limits_{i=0}^{\infty} \overline L_i $\newline 
(6) If $ A \subseteq N $ is finite, then all ultrafilters $ x \in \overline A $ are on finite level. \newline 
 
 \end{lemma}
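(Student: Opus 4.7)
The plan is to handle each of the six assertions in turn; all of them rest on unique prime factorization together with two standard facts about $\beta N$, namely $\overline{A\cap B}=\overline A\cap\overline B$, $\overline{A\cup B}=\overline A\cup\overline B$ (which hold since the map $A\mapsto\overline A$ is a Boolean isomorphism onto the clopen algebra), and $|\overline A|=2^c$ whenever $A\subseteq N$ is infinite.

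For (1), each $n\in N$ has a unique factorization into primes, so the number of prime factors counted with multiplicity is a well-defined integer $\Omega(n)$; the set $L_i$ is precisely $\{n:\Omega(n)=i\}$, so $L_i\cap L_j=\emptyset$ for $i\neq j$, whence $\overline L_i\cap\overline L_j=\overline{L_i\cap L_j}=\overline\emptyset=\emptyset$. For (2), I would partition $L_i$ according to the multiplicity pattern (i.e.\ the partition of $i$) in the prime factorization: the piece $P^i$ collects pure $i$-th powers $p^i$, the piece $P^{(i)}$ collects squarefree products of $i$ distinct primes, and the intermediate labels $P^{i-1}P$, etc.\ collect the remaining multiplicity types. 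Unique factorization again shows these pieces are pairwise disjoint subsets of $N$ whose union is $L_i$, so the same Boolean-algebra identities as above give the displayed decomposition of $\overline L_i$ into pairwise disjoint clopen sets.

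For (3), note $L_i$ is infinite for every $i\geq 1$ since $\{p^i:p\in P\}\subseteq L_i$, hence $\overline{L_i}$ is homeomorphic to $\beta L_i$ and has cardinality $2^c$. Part (4) is the same observation applied to each individual piece: $P^i$ is in bijection with $P$; $P^{(i)}$ is infinite because there are infinitely many squarefree products of $i$ distinct primes; and each intermediate piece contains an infinite family obtained by fixing all but one of the prime factors and varying the remaining one. So each $\overline{P^i}$, $\overline{P^{i-1}P}$, $\ldots$, $\overline{P^{(i)}}$ also has $2^c$ elements.

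For (5), if $n\in N$, set $i=\Omega(n)$; then $n\in L_i$, so the principal ultrafilter $\{n\}$ contains $L_i$, i.e.\ $\{n\}\in\overline{L_i}$, and by (1) it lies in exactly this one $\overline{L_i}$. For (6), if $A\subseteq N$ is finite and $x\in\overline A$, then $A\in x$, so the restriction $x\upharpoonright A$ is an ultrafilter on the finite set $A$; every such ultrafilter is principal, so there is $n\in A$ with $\{n\}\in x$, forcing $x=\{n\}$, and (5) places $x$ on a finite level. I do not expect any genuine obstacle here; the only mild point is the cardinality claim $|\overline A|=2^c$, which I would invoke as a standard fact about $\beta N$ rather than reprove.
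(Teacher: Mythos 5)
Your proposal is correct and follows essentially the same route as the paper: both arguments reduce every part to the disjointness/partition of the sets $L_i$ coming from unique factorization, the standard facts $\overline{A\cap B}=\overline A\cap\overline B$ and $\overline{A\cup B}=\overline A\cup\overline B$, the $2^c$ cardinality of $\overline A$ for infinite $A$ (the paper cites Walker's Theorem 3.3 for this), and the principality of ultrafilters containing a finite set. The only cosmetic difference is that you treat part (2) for general $i$ via multiplicity patterns, whereas the paper writes out only the case $i=3$ "to avoid cumbersome notation."
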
 
\begin{proof} 
(1) Assume $ \overline L_i \cap \overline L_j \neq \emptyset $ for some $ i \neq j $ , so there exist $ x \in \beta N $ such that $ x \in \overline L_i \cap \overline L_j $ , so $ L_i , , L_j \in x $ and $ L_i \cap L_j = \emptyset\in x $ a contradiction.Thus $ \overline L_i \cap \overline L_j\ =\emptyset $ for any $ i \neq j $\newline 
(2) To avoid cumbersome notation we prove this in case of $ i = 3 $. Ultrafilters in the 3rd level contains $ L_3 = {\{ a_1 a_2 a_3 : a_1, a_2, a_3 \in P \} = \{8, 12, 18, 20, 27,.....\}} $ and we can partition $L_3$ as: 
\begin{align*}
L_3 &= {\{8, 27 , 125 , ......\}} \cup \{12, 18, 20, 28......\} \cup \{30, 42, 66......\}\\ 
&= P ^ 3 \cup P^2 \ P \cup P ^{(3)} 
\end{align*}
So $ \overline L_3 = \overline { \rm P^3 \cup P^2 \, P \cup P ^ {(3)}} = \overline { \rm P ^3 } \cup \overline {\rm P ^ 2} P \cup \overline {\rm P {(3)}}  $.  Since $ P ^3 , P ^2 P , P ^ {(3)} $ are disjoint , then $ \overline {\rm P ^ 3} , \overline {{\rm P ^ 2} P} $ and $ \overline {\rm P ^{(3)}} $ are disjointed \newline
(3) Since $ L_i $ is infinite set and $ \overline L_i = L_i \cup L_i ^ * $ so by([5]Theorem 3.3) $ \overline L_i $ has $ 2 ^ c$ ultrafilters \newline 
(4) If we prove this in case of $ i = 3 $. All $ P ^ 3 , P ^ 2 \, P , P ^ {(3)} $ are infinite sets. Thus $ \overline {\rm P ^ 3} , \overline {\rm P ^ 2 \, P} $ and $ \overline {\rm P ^ {(3)}} $ have $ 2 ^ c $ elelments \newline 
(5) Since for any $ n \in N $ is prime number or product of prime numbers. So there exist $L_n$ such that $ n \in L_n$ and since $ \overline L_n = L_n \cup L_n ^ *, $ so $ n \in \overline L_n $ as principal ultrafilter.Thus $ N\subset\bigcup\limits_{i=0}^{\infty} \overline L_i $ \newline 
(6) Let $ A = \{n _1 , n _2 , ........, n _m\} $ be a finite subset of N. For any $ x \in \overline A $, we have ${\{n_1.n_2,.....,n_m}\}\in x $, so ${x=n_i}$ for exactly of one of $ 1 \leqslant i \leqslant \, m $.Thus $x$ is principal ultrafilter, and by (5) $x$ is on finite level.\newline  
\end{proof} 
\begin{defn}	
(1)([4]Definition 5.1) We call ultrafilters of the form $ p ^ k $ for some $ p \in \overline P $ and $ k \in N $ \textbf{basic}. Let $\mathcal {B}$ be the set of all basic ultrafilters and let $\mathcal {A}  $ be the set of all functions  $ \alpha : \mathcal{B} \rightarrow N \cup \{0\} $ with finite support $(\ \{b \in \mathcal {B} : \alpha\, {(b)} \neq 0  \} $  is finite)
i.e $ \alpha = \{(b_1 , n_1) (b_2, n_2)........ (b_m, n_m)\} $ , $ \alpha (b) = 0 $ for $ \alpha \notin \{b_1 , b_2 ,...., b_m\} $ . Let $ \alpha = \{ ({p_1} ^ {k_1}, n_1) , ({p_2} ^{k_2} , n_2), ....., ({p_m}^ {k_m} , n_m) \} \in \mathcal {A}  $ , $ (p_i \in \overline P) $ \newline Set $ F_\alpha =  \{({{A_1} ^ {k_1}}) ^ {(n_1)}  ({{A_2} ^ {k_2}}) ^ {(n_2)} ....... ({{A_m} ^{k_m}}) ^ {(n_m)} : A_i \in p_i \upharpoonright P , A_i = A_j \, if \newline\ p_i = p_j \, \, A_i \cap A_j = \emptyset \, \, otherwise \} $ \newline where $ ({{A_1} ^ {k_1}}) ^{(n_1)} ({{A_2} ^{K_2}}) ^ {(n_2)} ...... ({{A_m} ^ {k_m}}) ^{(n_m)} = \{ $$\prod\limits _ {i = 1} ^ {m} \prod\limits _ {j=1} ^ {n_i} a_{i,j} ^ {k_i}  : a_{i,j} \in A_i \newline for \ all \ i, j \ and \ a_{i,j} are \, \, distinct\}  $$ $ \newline\newline 
(2)([4]Definition 3.1) If $pow _n :  N \rightarrow N $ is defined by $ pow_n(a)=\ a^n $ then for $ x \in N^*$ , $ \widetilde {pow _n }(x) $ is generated by sets $ A^n $ for $A \in x$ . we will denote $ \widetilde {pow _n} (x) \ $, with $ x ^n $  \newline  
\begin{exmp}
If $ \alpha = \{(p^2, 1)\} , $ then $ F_\alpha = \{A^2 : A \in p  \upharpoonright P \} $.\newline If $ \alpha = \{ (p, 2) \} $ then $  F_\alpha = \{A^{(2)} : A \in p \upharpoonright P \} $. If $ \alpha = \{ (p^3,2) , (q^2,2) \}  $ , then $ F_\alpha = \{ {(A^3)} ^{(2)} {(B^2)}^{(2)}:A\in p \upharpoonright P , B \in q \upharpoonright P, A \cap B = \emptyset\}   $ 
\end{exmp}
\begin{defn} 
	([4] Definition 5.4)If $ \alpha=\{ ({p_1}^ {k_1}, n_1 ),({p_2}^{k_2},n_2),..,({p_m}^{k_m},n_m)  \}\in \mathcal{A} , p_i \in \overline P ,$ we denote $ \sigma (\alpha) = \displaystyle \sum_ {i=1} ^ {m} k_i n_i  $ 
\end{defn}

\end{defn} 
\end{defn} 
\end{defn}
\begin{thm}
	 
 $ F_\alpha \subseteq x $ for some $ \alpha \in \mathcal {A} $ if and only if $x$ is n-th level $ \overline L_n $ (for $ n \in N$) such that $ \sigma (\alpha) = n $ 
\end{thm}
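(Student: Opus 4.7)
The forward direction is a combinatorial observation. Given $\alpha=\{(p_1^{k_1},n_1),\ldots,(p_m^{k_m},n_m)\}$, every element of every set in $F_\alpha$ has the form $\prod_{i=1}^{m}\prod_{j=1}^{n_i}a_{i,j}^{k_i}$ with the $a_{i,j}$ mutually distinct primes, so it is a product of exactly $\sum_i n_i k_i=\sigma(\alpha)$ primes counted with multiplicity. Therefore every set in $F_\alpha$ is contained in $L_{\sigma(\alpha)}$, and $F_\alpha\subseteq x$ gives $L_{\sigma(\alpha)}\in x$, i.e.\ $x\in\overline L_{\sigma(\alpha)}$. Lemma 2.3(1) forces the level to be unique, so $n=\sigma(\alpha)$.

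For the backward direction, fix $x\in\overline L_n$. By Lemma 2.3(2), $x$ lies in exactly one shape class corresponding to a partition $\lambda_1\ge\cdots\ge\lambda_r$ of $n$. Let $S_\lambda\in x$ be the set of $a\in L_n$ whose standard factorization has exponent-multiset $\{\lambda_1,\ldots,\lambda_r\}$. For every value $k$ occurring in $\lambda$ with multiplicity $m_k$ and every $j\in\{1,\ldots,m_k\}$, define $g_{k,j}:N\to N$ by letting $g_{k,j}(a)$ be the $j$-th smallest prime factor of $a$ whose exponent in $a$ equals $k$, for $a\in S_\lambda$, and extending $g_{k,j}$ arbitrarily elsewhere. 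Since $S_\lambda\in x$ and $g_{k,j}(S_\lambda)\subseteq P$, the ultrafilter $\hat\pi_{k,j}:=\tilde g_{k,j}(x)$ lies in $\overline P$.

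Collect the distinct basic ultrafilters in the list $\{\hat\pi_{k,j}^{\,k}\}_{k,j}$, call them $\rho_1,\ldots,\rho_s$ with multiplicities $n_1,\ldots,n_s$, write $\rho_i=\pi_i^{k_i}$ with $\pi_i\in\overline P$, and set $\alpha(\rho_i)=n_i$. Since every slot $(k,j)$ contributes exactly $k$ to $\sigma(\alpha)$, one obtains $\sigma(\alpha)=\sum_k k\,m_k=n$. To verify $F_\alpha\subseteq x$, take $A_i\in\pi_i\upharpoonright P$ with $A_i=A_j$ whenever $\pi_i=\pi_j$ and $A_i\cap A_j=\emptyset$ otherwise. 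For each slot $(k,j)$ sitting in group $i(k,j)$ we have $A_{i(k,j)}\in\hat\pi_{k,j}$, so $g_{k,j}^{-1}(A_{i(k,j)})\in x$, and hence the finite intersection $T:=S_\lambda\cap\bigcap_{k,j}g_{k,j}^{-1}(A_{i(k,j)})$ lies in $x$. Each $a\in T$ factors as a product over the slots $(k,j)$ of the $k$-th power of a prime belonging to $A_{i(k,j)}$, with all these primes pairwise distinct by the canonical-order construction; regrouping these factors by their group $i$ exhibits $a$ as an element of $\prod_i(A_i^{k_i})^{(n_i)}$, so $T\subseteq\prod_i(A_i^{k_i})^{(n_i)}\in x$.

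The main obstacle is this final verification: one has to check that the compatibility conditions on the $A_i$ (equality when the bases $\pi_i$ coincide, disjointness otherwise) are precisely what is needed to guarantee that primes pulled from slots in the same group live in the common set $A_i$, while primes in different groups live in disjoint sets so the global distinctness of the canonical factorization survives the regrouping. Minor issues such as the arbitrary extension of $g_{k,j}$ off $S_\lambda$, and the dependence on the canonical ordering, are harmless because $x$ is concentrated on $S_\lambda$ and only finitely many $g_{k,j}$ are involved.
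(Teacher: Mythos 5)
Your forward direction coincides with the paper's: every set in $F_\alpha$ is contained in $L_{\sigma(\alpha)}$, so $L_{\sigma(\alpha)}\in x$, and the pairwise disjointness of the $\overline L_i$ pins down the level. The genuine divergence is in the backward direction: the paper does not prove it at all but simply cites [4, Theorem 5.5], whereas you reconstruct a self-contained argument. Your construction --- split $L_n$ into the finitely many shape classes indexed by partitions of $n$ and pick the unique $S_\lambda\in x$; for each slot $(k,j)$ push $x$ forward along the ``$j$-th smallest prime factor of exponent $k$'' map to get $\tilde g_{k,j}(x)\in\overline P$; group slots by the resulting basic ultrafilter to define $\alpha$ with $\sigma(\alpha)=\sum_k k\,m_k=n$; and verify $F_\alpha\subseteq x$ by noting that $T=S_\lambda\cap\bigcap_{k,j}g_{k,j}^{-1}(A_{i(k,j)})\in x$ and $T$ is contained in the corresponding member of $F_\alpha$ --- is correct, and it buys a proof where the paper only has a citation. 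One small correction to your closing remark: the global distinctness of the primes in the regrouped factorization of $a\in T$ is not supplied by the equality/disjointness conditions on the $A_i$; it is automatic, because the $g_{k,j}(a)$ are distinct prime factors of the single number $a$ (distinct slots differ either in the exponent $k$ or in the rank $j$). The conditions on the $A_i$ merely delimit which tuples define members of $F_\alpha$, and your verification works uniformly for every admissible tuple, which is all that is required.
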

\begin{proof}
$ (\Rightarrow) $ Let $ \alpha = \{ ({ p_1} ^ {k_1} , n_1) , ({ p_2} ^ {k_2}, n_2),......, ({p_m} ^ {k_m}, n_m) \} \in \mathcal {A} $\newline with $ \sigma (\alpha) = n $ and $ F_\alpha \subseteq x $ , so $ ({{A_1}} ^{k_1})^{(n_1)}  ({{A_2}} ^{k_2}) ^ {(n_2)}...... ({{A_m}} ^ {(k_m)}) ^ {(n_m)} \in x , $  and since $ ({{A_1}} ^ {k_1}) ^ {(n_1)} ({{A_2}} ^ {k_2}) ^{(n_2)} ({{A_m}} ^{k_m}) ^ {(n_m)}  \subseteq L_n. $ So $ L_n \in x $ and $ x \in \overline L_n $  \newline 
$ (\Leftarrow)$ By ([4] Theorem 5.5) 
\end{proof} 
\begin{thm}
(a) If $ x \in \overline L_m $ and $ y \in \overline L_n .$ Then $ x \, y \in \overline {L_{m + n}}$ \newline 
(b) If $ F_\alpha \subseteq x $ and $ F_\beta \subseteq y . $ Then $ F_{\alpha +\beta}  \subseteq x y $ where \newline  $ \alpha + \beta = \{(b, n+ n^ {\prime}) : (b, n) \in \alpha  , (b, n^ {\prime} ) \in \beta \} $ 
\end{thm}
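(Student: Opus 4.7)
The plan is to reduce both parts to Lemma 1.1 (the formula for products in $\beta N$). Part (a) is a one-line application; part (b) requires a bookkeeping argument that splits each element of $F_{\alpha+\beta}$ into an $\alpha$-piece and a $\beta$-piece.

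For (a), since $L_m \in x$ and $L_n \in y$, I take $B = L_m$ and $C_k = L_n$ for every $k \in B$. Any $k \in L_m$ is a product of $m$ primes (with multiplicity) and any $\ell \in L_n$ is a product of $n$ primes, so $k\ell \in L_{m+n}$. Hence $\bigcup_{k \in B} k\, C_k \subseteq L_{m+n}$, and Lemma 1.1 gives $L_{m+n} \in xy$, i.e., $xy \in \overline{L_{m+n}}$.

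For (b), I interpret $\alpha + \beta$ with the convention that a missing entry contributes $0$, so the support of $\alpha+\beta$ is $\mathrm{supp}(\alpha) \cup \mathrm{supp}(\beta)$. Fix $S \in F_{\alpha+\beta}$; by definition $S = (A_1^{k_1})^{(N_1)} \cdots (A_M^{k_M})^{(N_M)}$ with $N_i = n_i + s_i$, where $n_i = \alpha(p_i^{k_i})$ and $s_i = \beta(p_i^{k_i})$, and the $A_i \in p_i \upharpoonright P$ satisfying the disjointness/equality conditions of the $F$-definition. Set $B = \prod_{i:\ n_i>0}(A_i^{k_i})^{(n_i)}$; using the same $A_i$'s, $B \in F_\alpha \subseteq x$. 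Each $b \in B$ has the form $\prod_i \prod_{j=1}^{n_i} a_{i,j}^{k_i}$ for specific primes $a_{i,j} \in A_i$. Let $A_i' = A_i \setminus \{a_{i,j}\}_j$ when $n_i > 0$ and $A_i' = A_i$ otherwise. Since only finitely many points are removed, $A_i' \in p_i \upharpoonright P$. Define $C_b = \prod_{i:\ s_i>0}(A_i'^{k_i})^{(s_i)} \in F_\beta \subseteq y$. Then for any $c \in C_b$, the combined factors of $b$ and $c$ supply, for each $i$, exactly $n_i + s_i = N_i$ distinct primes drawn from $A_i$, while the cross-$i$ disjointness is inherited from $S$; hence $bc \in S$. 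Lemma 1.1 then yields $S \in xy$, so $F_{\alpha+\beta} \subseteq xy$.

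The main obstacle is the bookkeeping when $\alpha$ and $\beta$ share a basic ultrafilter in their supports: the primes $a_{i,j}$ already consumed by $b$ must not reappear in $c$. Shrinking $A_i$ to $A_i'$ prevents this, and the shrinkage is harmless because $A_i \setminus A_i'$ is finite, so $A_i'$ remains in $p_i \upharpoonright P$ (when $p_i$ is non-principal; the principal case reduces to $p_i^{k_i}$ itself being principal and is verified directly). Part (a) can alternatively be deduced from part (b) via Theorem 2.1 since $\sigma(\alpha + \beta) = \sigma(\alpha) + \sigma(\beta)$.
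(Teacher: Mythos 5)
Your part (a) is exactly the paper's argument: both apply Lemma 1.1 with $B=L_m$ and $C_k=L_n$ for every $k\in L_m$, so there is nothing to compare there. For part (b), however, you take a genuinely different and substantially more careful route. The paper's proof of (b) consists of citing Theorem 2.7 together with part (a): from $F_\alpha\subseteq x$ and $F_\beta\subseteq y$ it concludes $x\in\overline{L_m}$, $y\in\overline{L_n}$, hence $xy\in\overline{L_{m+n}}$ with $m+n=\sigma(\alpha+\beta)$, and then invokes Theorem 2.7 again to get $F_{\alpha+\beta}\subseteq xy$. But the backward direction of Theorem 2.7 only guarantees $F_\gamma\subseteq xy$ for \emph{some} $\gamma$ with $\sigma(\gamma)=m+n$; it does not identify $\gamma$ with $\alpha+\beta$, so the paper's argument leaves the essential content of (b) unproved. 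Your direct construction --- fixing $S\in F_{\alpha+\beta}$, choosing $B\in F_\alpha\subseteq x$ built from the same $A_i$, and for each $b\in B$ passing to the cofinitely shrunken sets $A_i'$ so that $C_b\in F_\beta\subseteq y$ and $bC_b\subseteq S$, then applying Lemma 1.1 --- actually establishes the stated containment, and is the argument the theorem needs. Two small refinements: when two entries of $\alpha\cup\beta$ involve the same $p$ (e.g.\ $p^{k}$ and $p^{k'}$ with $k\neq k'$, forcing $A_i=A_{i'}$), your $A_i'$ should exclude \emph{all} primes consumed by $b$ over that common set, not just those with first index $i$, so that the distinctness condition in the definition of $(A_1'^{k_1})^{(s_1)}\cdots$ and the requirement $A_i'=A_{i'}'$ in $F_\beta$ both hold; and the principal case you flag is genuinely degenerate (if $p_i\in P$ and $N_i\geq 2$ then $\{p_i\}\in p_i\upharpoonright P$ makes some $S\in F_{\alpha+\beta}$ empty), which is a defect of the definitions inherited from the paper rather than of your proof.
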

\begin{proof}
(a) Let $ x \in \overline L_m $ and $ y \in \overline L_n , $ , then $L_m\in x, L_n\in y$. In (Lemma 1-1) if we put $ B = {L_m} $ and for all $n\in L_m$ ,$C_n=L_n$,then we have \newline ${L_mL_n}={\bigcup\limits_{n\in L_m} nL_n}\in xy$, $xy\in\overline{L_mL_n}$. Thus $xy\in\overline {L_{m+n}}$ \newline
(b) Let $ F_\alpha \subseteq x $,$F_\beta\subseteq y $ and $ \alpha , \beta \in \mathcal {A} , $ so $ x $ is the m-th level $ \overline L_m $ and $y$ is the n-th level $\overline L_n $ , where $\sigma (\alpha) = m $ and $ \sigma (\beta) = n , $ so by (Theorem 2.7) and (a) $ xy  \in \overline {L_{m+n}} $ where $ m+n = \sigma (\alpha + \beta),$ and also by ( Theorem 2-7 ) we have $ F_ {\alpha + \beta} \subseteq x y. $ 
\end{proof}  
\begin{lemma} 
$ : \bigcup\limits_{i=0} ^ {n} \overline L_i = \overline{\bigcup\limits_ {i=0} ^{n}} L_i  $
\end{lemma}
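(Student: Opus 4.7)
The plan is to prove the two inclusions separately, relying only on the defining property of basic clopen sets in $\beta N$, namely $\overline{A} = \{x \in \beta N : A \in x\}$, together with the standard ultrafilter fact that a finite union lies in an ultrafilter if and only if one of its members does.

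For the inclusion $\bigcup_{i=0}^{n} \overline{L_i} \subseteq \overline{\bigcup_{i=0}^{n} L_i}$, I would take $x \in \overline{L_j}$ for some $0 \leq j \leq n$, so $L_j \in x$. Since $L_j \subseteq \bigcup_{i=0}^{n} L_i$ and $x$ is closed under supersets, $\bigcup_{i=0}^{n} L_i \in x$, which puts $x \in \overline{\bigcup_{i=0}^{n} L_i}$. This direction requires no case analysis, only the upward closure axiom for filters.

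For the reverse inclusion, I would take $x \in \overline{\bigcup_{i=0}^{n} L_i}$, so $\bigcup_{i=0}^{n} L_i \in x$. The key step is the finite-union property of ultrafilters: if $A \cup B \in x$ and neither $A$ nor $B$ is in $x$, then $N \setminus A$ and $N \setminus B$ are both in $x$, whence their intersection $N \setminus (A \cup B)$ is in $x$, contradicting $A \cup B \in x$. Iterating this on $n+1$ sets (a quick induction on $n$), we conclude $L_i \in x$ for at least one $i \in \{0, 1, \ldots, n\}$, so $x \in \overline{L_i} \subseteq \bigcup_{i=0}^{n} \overline{L_i}$.

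There is no real obstacle here; the statement is essentially the fact that $A \mapsto \overline{A}$ commutes with finite unions on $\mathcal{P}(N)$, specialized to the family $\{L_0, L_1, \ldots, L_n\}$. The only point worth stating carefully is the inductive extension of the two-set ultrafilter property to the finite union of the $L_i$, since the number of levels is arbitrary; everything else is a direct unfolding of definitions.
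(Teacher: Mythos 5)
Your proof is correct and follows essentially the same route as the paper's: upward closure of the ultrafilter for one inclusion, and the finite-union property of ultrafilters (that $\bigcup_{i=0}^{n} L_i \in x$ forces some $L_i \in x$) for the other. The only difference is that you justify the finite-union step explicitly via complements and induction, whereas the paper asserts it directly; this is a matter of detail, not of approach.
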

\begin{proof} 
Let $x \in \beta N , x \in \bigcup\limits_ {i=0} ^ {n} \overline L_i $ then $ x \in \overline L_i $ for some $i $ , so $ L_i \in x$ , $ \bigcup\limits_ {i=0} ^ {n} L_i \in x , $ and $ x \in  \overline {\bigcup\limits_ {i=0} ^ {n}} L_i  $ . Thus $ \bigcup\limits_ {i=0} ^ {n} \overline L_i \subset \overline {\bigcup\limits_ {i=0} ^{n} }$. If $ x \in \overline {\bigcup\limits_ {i=0} ^{n} } L_i $, then $\bigcup\limits_{i=0}^{n}L_i\in x $, so  $L_i\in x $ for some  $ i $ and $ x\in\overline L_i $. Thus $ x \in \bigcup\limits_ {i=0} ^ {n} \overline L_i $ , so $ \overline {\bigcup\limits_ {i=0} ^ {n}} L_i  \subset \bigcup\limits_ {i=0} ^ {n} \overline L_i $. Therefore \newline $ \bigcup\limits_ {i=0} ^{n} \overline L_i = \overline {\bigcup\limits_ {i=0} ^ {n} L_i }$  
\end{proof}
Since the irreducible elements are the prime elements with respect to $\mid_l,\mid_r$ and $\mid_m$-divisibilities, then the notion that any natural number is either a prime number or a product of prime numbers transfer from N to the set of ultrafilters which are on finite levels.
\begin{thm}
	Any ultrafilter $ x \in\beta N$ on finite level $ (x\in\overline L_i) $ where $ i \geq 1 $ is irreducible or product of irreducible elements.
\end{thm}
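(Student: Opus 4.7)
The plan is to prove this by strong induction on the level $i \geq 1$. For the base case $i=1$, every $x \in \overline{L_1} = \overline{P}$ is irreducible by Lemma 1.4(c), so the conclusion holds. For the inductive step, I would assume the result for all levels $1, 2, \ldots, i$ and consider $x \in \overline{L_{i+1}}$. If $x$ is irreducible, there is nothing to prove; otherwise $x = yz$ for some $y, z \in \beta N \setminus \{1\}$, and the goal reduces to showing that $y$ and $z$ each lie on some $\overline{L_m}$ with $1 \le m \le i$, after which the inductive hypothesis finishes the argument.

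The heart of the proof, and what I expect to be the main obstacle, is showing that any non-trivial factorization $x = yz$ of a finite-level ultrafilter forces both factors onto finite levels. To handle this I would use Lemma 1.1: since $L_{i+1} \in yz$, there exist $B \in y$ and $\langle C_k \rangle_{k \in B}$ in $z$ with $\bigcup_{k \in B} k C_k \subseteq L_{i+1}$. The key observation is that the number of prime factors counted with multiplicity is additive, so for each $k \in B$ and each $c \in C_k$ we have $\Omega(k) + \Omega(c) = i+1$. Since each $C_k$ must be non-empty, this forces $\Omega(k) \le i+1$, hence $B \subseteq \bigcup_{m=0}^{i+1} L_m$. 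Therefore $\bigcup_{m=0}^{i+1} L_m \in y$, and Lemma 2.9 together with the disjointness from Lemma 2.5(1) gives a unique $m$ with $y \in \overline{L_m}$.

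Once $y \in \overline{L_m}$ is established, I would refine $B$ by replacing it with $B \cap L_m \in y$, so that every $k$ in this refined set satisfies $\Omega(k) = m$. Then for each such $k$, the family member $C_k \in z$ is contained in $L_{i+1-m}$, which forces $L_{i+1-m} \in z$ and hence $z \in \overline{L_{i+1-m}}$. The exclusion $y, z \neq 1$ rules out $m = 0$ and $m = i+1$, so $1 \le m \le i$, placing both $y$ and $z$ strictly below level $i+1$.

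At that point the induction hypothesis applies to both $y$ and $z$: each is either irreducible or a product of irreducibles, and concatenating these factorizations expresses $x = yz$ as a product of irreducibles, completing the induction. The routine verifications (that $\Omega$ is indeed additive on $L_{i+1}$, that $B$ can be refined to $B \cap L_m$, and that the principal ultrafilter $1$ corresponds exactly to level $0$) are straightforward once the overall structure is in place.
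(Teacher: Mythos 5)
Your proof is correct and follows essentially the same route as the paper: strong induction on the level, with the key step being that any nontrivial factorization $x=yz$ of an ultrafilter in $\overline{L_{i+1}}$ forces $y\in\overline{L_m}$ and $z\in\overline{L_{i+1-m}}$ for some $1\le m\le i$, after which the inductive hypothesis applies to both factors. The paper carries out that key step by computing the quotient sets $L_{i+1}/m$ directly from the definition of the product in $\beta N$, whereas you use Lemma 1.1 together with the additivity of the prime-factor count; these are equivalent devices, and your write-up of the level-identification step is if anything the more explicit of the two.
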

\begin{proof}
	(1) If $ x\in\overline L_1 $ , then by (Lemma 1-4 (c)) $x$  is irreducible.\newline\
	(2) If $x\in\overline L_2 $: For $ x \in L_2 $, $ x $ is product of two prime numbers, so by (Lemma 1-4 (c)) $ x $ is product of irreducible elements. Let $ x\in {L_2}^*$,$ x=y z \in {L_2}^*, \newline (L_2 \in  x= yz )$, then $ {\{m \in N : L_2/m \in z}\} \in y $, and since $ L_2/m = \{r \in N : rm \in L_2 \} $. So we have four  cases for values of $L_2/m$ such as :
	\begin{align*} 
	L_2 /m = \{r \in N : rm \in L_2 \} &= 1  \text{ when } m  \text{ is in }L_2 \rightarrow (1) \\
	&= L_2  \text { when } m =1 \rightarrow (2)\\   
	&= L_1 \text { when } m \text { is in }L_1 \rightarrow (3)\\    
	& = \emptyset \text { when } m \text { is }  \text { otherwise }  \rightarrow (4)  
	\end{align*} 
	Now : By (1) and (2) we have either $ y= 1 $ or $z= 1$ , so $x$ is irreducible. And by (3) $ x$ is product of two prime elements so by (Lemma 1.4 (c) ) $x$ is product of irreducible elements. \newline \newline
	(3) If $ x \in \overline L_3 $: For $ x\in L_3$, $ x $ is product of three prime numbers, so by (Lemma 1-4 (c)) $ x$ is product of irreducible elements. Let $ x\in {L_3}^*$, $ x = y z \in {L_3}^* (L_3 \in x = y z) $, then ${\{m \in N : L_3/m \in z}\}\in y $. and sine $L_3 /m = \{ r \in N : rm \in L_3 \} $. So we have five cases for values of $L_3/m$.such as:
	\begin{align*} 
	L_3 /m = \{ r \in N : rm \in L_3 \}& = 1 \text { when } m \text { is in }  L_3 \rightarrow (1)\\   
	&= L_3 \text { when }  m=1 \rightarrow (2)\\
	&= L_2 \text { when } m \text { is in } L_1 \rightarrow (3)\\
	&= L_1 \text { when } m \text { is in }  L_2 \rightarrow (4)\\
	&=\emptyset \text { when } m \text { is otherwise }  \rightarrow (5) 
	\end{align*} 
	Now : By (1) and (2) we have either $ y= 1 $ or $ z= 1$ . Thus , $x$ is irreducible. By (3) $ L_1 \in y , L_2 \in z, $ we have two cases : (a) $ y$ is irreducible and $z$ is irreducible. Therefore , $x$ is product of irreducible elements . (b) $y$ is irreducible , and $z$ is product of irreducible elements. Therefore , $x$ is product of irreducible elements . Similar to (3) , we have by (4) $x$ is product of irreducible elements. \newline \newline 
	(4) If we continue in this way and we suppose that any element $ x \in \overline L_i $ , $\ 1 \leqslant i \leqslant n - 1 $ is irreducible or product of irreducible elements. Then we can prove that for any $ x \in \overline L_n $ is irreducible or product of irreducible elements such as : \newline \newline If $ x \in \overline L_n $: For $ x\in L_n $, $ x $ is product of n times prime numbers, so by ( Lemma 1-4 (c)) $ x $ is product of irreducible.Let $ x\in {L_n}^*$, $ x = y z \in {L_n}^* (L_n \in x = y z) $, then ${\{m \in N : L_n / m \in z}\}\in y$, and sine $L_n /m = \{ r \in N : rm\in L_n \}$. So we have n+2 cases for values of $L_2 / n$ such as: 
	\begin{align*}
	L_ n /m = \{ r \in  N : m r\in L_n \} &= 1 \text { when } m  \text { is in }  L_n \rightarrow (1)\\
	& = L_n \text { when } m=1 \rightarrow (2) \\
	& = L_{n-1} \text { when } m \text { in } L_1\rightarrow (3)\\
	& = L_{n-2} \text { when } m \text { is in }  L_2 \rightarrow (4)\\
	& . \\
	& .\\
	& .\\
	& .\\  
	& = L_2 \text { when } m \text { is in }  L_{n-2} \rightarrow (n)\\ 
	& = L_1 \text { when } m \text { is in } L_{n-1} \rightarrow (n+1)\\ 
	& = \emptyset \text { when } m \text { is otherwise }  \rightarrow (n+2) 
	\end{align*}
	Now  By (1) and (2) we have either $ y=1 $ or $z=1$ , so $x$ is irreducible . Moreover,in all the other cases we have $x$ is product of irreducible elements. 
\end{proof}
\begin{cor}
(a) For any n-th level, $ \overline L_n $ has $ 2^c$ irreducible elements.
\newline (b) For any n-th level, $ \overline L_n$ there are $ 2^c$ ultrafilters in $ L^*$ that are not irreducible ( product of irreducible elements ).
\end{cor}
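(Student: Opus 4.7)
My plan is to prove each part by exhibiting $2^c$ ultrafilters with the required property.

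For part (a), I would work inside $\overline{P^n}$: by Lemma 2.3(4) this set has $2^c$ elements, and since $P^n$ itself is countable, $(P^n)^* := \overline{P^n} \setminus P^n$ still has $2^c$ elements. The key claim is that every $x \in (P^n)^*$ is irreducible. To prove this, I would assume $x = yz$ with $y, z \ne 1$; then since $P^n \in x$, the definition of multiplication yields $M := \{m : P^n/m \in z\} \in y$. A short computation shows that $P^n/m$ equals $P^n$ (if $m = 1$), $\{1\}$ (if $m \in P^n$), the singleton $\{p^{n-k}\}$ (if $m = p^k$ for a prime $p$ and $1 \le k \le n-1$), and $\emptyset$ otherwise. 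Thus $M \subseteq \{1\} \cup P \cup \cdots \cup P^n$, and $y$ must concentrate on exactly one block $P^k$. The cases $k = 0$ and $k = n$ contradict $y \ne 1$ and $z \ne 1$ respectively, while for $1 \le k \le n-1$ the non-principality of $y$ would force $z$ to contain infinitely many distinct singletons $\{p^{n-k}\}$, which is impossible.

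For part (b), I would assume $n \ge 2$ and fix a principal $m \in L_{n-1}$, say $m = 2^{n-1}$. The plan is to show that the map $\varphi : \overline{P} \setminus P \to \overline{L_n}$ defined by $\varphi(p) := p \cdot m$ is an injection into the set of non-principal reducible ultrafilters. To see $\varphi(p) \in \overline{L_n}$ I would compute $L_n / m = P$ (since adjoining a prime factor to an element of $L_{n-1}$ gives an element of $L_n$), hence $L_n \in pm$ because $P \in p$. To see $\varphi(p)$ is non-principal, I note that $\{k\} \in pm$ would force $\{k\}/m \in p$, which is a singleton or $\emptyset$, contradicting non-principality of $p$. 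For injectivity, given $p \ne p'$ I pick $B \in p \setminus p'$ and set $A := mB$; then $A/m = B$, giving $A \in pm \setminus p'm$. Each $\varphi(p) = p \cdot m$ is a product of two non-identity elements, so not irreducible, and since $|\overline{P} \setminus P| = 2^c$ this delivers the required $2^c$ reducible ultrafilters.

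The main obstacle is the case analysis in part (a), where one must compute $P^n/m$ for each type of $m$ and then exploit the fact that no ultrafilter can contain two disjoint singletons in order to eliminate the intermediate blocks $P^k$ with $1 \le k \le n-1$. Part (b) is a clean transport argument, with the only subtlety being that right-multiplication by the principal $m$ must preserve enough distinguishing sets to be injective.
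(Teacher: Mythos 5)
Your argument is correct and reaches the required conclusions, but by a genuinely different and more self-contained route than the paper. For (a) the paper simply invokes [2, Proposition 7.4] to obtain an infinite $A \subseteq L_n$ with every element of $A^*$ irreducible and then counts $A^*$ via [5, Theorem 3.3]; you instead exhibit such a set explicitly, namely $P^n$, and prove irreducibility of every $x \in (P^n)^*$ by hand through the computation of $P^n/m$. This buys independence from the cited proposition at the cost of a case analysis, and that analysis has one small omission: in the intermediate case where $M \cap P^k \in y$ with $1 \le k \le n-1$, you only treat nonprincipal $y$. If $y$ is principal, say $y = p^k$, then $z$ must contain the singleton $\{p^{n-k}\}$, so $z$ and hence $x = p^n$ are principal, contradicting $x \in (P^n)^*$; one line fixes this. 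For (b) the paper fixes a nonprincipal $p \in L_{n-1}^*$ and varies the left factor over $2^c$ elements of $\overline{L_1}$, appealing to [1, Theorem 8.11] for distinctness of the products, whereas you fix the principal $m = 2^{n-1}$ and vary the nonprincipal left factor over $P^*$; because your fixed factor is principal, membership in $\overline{L_n}$, non-principality, and injectivity all reduce to elementary computations with $A/m$ and no external theorem is needed. Both proofs implicitly require $n \ge 2$ in (b) (for $n = 1$ every element of $\overline{L_1} = \overline{P}$ is irreducible, so the claim fails there); you state this hypothesis explicitly, which the paper does not.
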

\begin{proof}
(a) Since for any $ L_n\subset N$ is infinite,so by ( [2] proposition 7.4 ) there exist infinite set $ A\subset L_n $, such that all elements of $ A^* $ are irreducible. Also since by ([ 5] Theorem 3-3 ) $ A^*$ has $ 2^c$elements and $ A^*\subset L_n^*$. Thus $ \overline L_n$ has $ 2^c$ irreducible elements. \newline (b) Let $x$ and $y$ are distinct elements of $ \overline L_1 $ and let $ p\in L_{n-1}^*$.Since for every disjoint subset $A$ and $B$ of $ L_1$ we have $ \overline A p\cap\overline B p =\emptyset $, then by ( [1] Theorem 8.11 (5),(3)), $  xp \neq\ yp $ [$xp , yp\in L_n^*$ by (Theorem 2-8 (a)  )]. Thus $ L_n^*$ has $ 2^c$ ultrafilters which are product of irreducible elements.
\end{proof}
\begin{cor}
If $ x\in\overline L_n, n\geq 2 $ is not irreducible ultrafilter, then there exist at least two ultrafilters $ x_i\in\overline L_i, x_j\in\overline L_j,  i,j<\ n, x=x_ix_j $.
\end{cor}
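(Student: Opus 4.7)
The plan is to extract the levels of $y$ and $z$ from any factorization $x = yz$, using the case analysis of $L_n/m$ that already appears in the proof of Theorem 2.10. Since $x \in \overline{L}_n$ is not irreducible, by definition we may write $x = yz$ with $y, z \in \beta N - \{1\}$. By Theorem 2.8(a), once we locate levels $i$ and $j$ for $y$ and $z$, we will automatically have $i + j = n$, so $i, j < n$ follows as soon as we rule out $i = 0$ and $j = 0$.

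First I would record, by a direct count of prime factors with multiplicity, that $L_n/m = L_{n-k}$ whenever $m \in L_k$ with $0 \leq k \leq n$, and $L_n/m = \emptyset$ otherwise (this is precisely the case-split displayed in the proof of Theorem 2.10). Since $L_n \in x = yz$, the set $B = \{m \in N : L_n/m \in z\}$ lies in $y$. Because $\emptyset \notin z$, we get $B \subseteq \bigcup_{k=0}^{n} L_k$, and the disjoint sets $B \cap L_k$ partition $B$ into finitely many pieces. As $y$ is an ultrafilter, exactly one piece belongs to $y$; let $i$ be its index, so that $L_i \in y$ and hence $y \in \overline{L}_i$. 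For every $m$ in this piece we have $L_n/m = L_{n-i} \in z$, so $z \in \overline{L}_{n-i}$; set $j = n - i$.

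Finally I would eliminate the boundary cases. Since $\overline{L}_0 = \{1\}$, the option $i = 0$ would force $y = 1$ and $j = 0$ would force $z = 1$, both excluded by the hypothesis that $x$ is not irreducible. Therefore $1 \leq i, j \leq n-1$, and setting $x_i = y$, $x_j = z$ gives $x = x_i x_j$ with $x_i \in \overline{L}_i$ and $x_j \in \overline{L}_j$, as required. The only mildly delicate step is the description of $L_n/m$, which is the main obstacle only in that it demands careful bookkeeping of indices; otherwise the argument is an immediate application of the ultrafilter axioms to the partition of $B$ by levels.
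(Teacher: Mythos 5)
Your proof is correct and follows essentially the same route as the paper: the paper's proof of this corollary simply invokes Theorem 2.10, whose own proof rests on exactly the case analysis of $L_n/m$ that you carry out. Your version is a more careful, self-contained unfolding of that argument (in particular, the explicit use of the finite partition $B=\bigcup_{k\le n}(B\cap L_k)$ to pin down the levels of $y$ and $z$, and the explicit exclusion of $i=0$ and $j=0$ via $\overline{L_0}=\{1\}$), but the underlying idea is the one already in the paper.
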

\begin{proof}
Let $ x\in\overline L_n$, and $ x $ is not irreducible, then by ( Theorem 2.10 ) $x$ is product of at least two irreducible elements $ x_i,x_j, i,j<\ n, x_i\in\overline L_i, \newline x_j\in\overline  L_j,  L_iL_j=L_n$ and  $x=x_ix_j\in\overline L_i\overline L_j\subset\overline {L_iL_j}=\overline L_n$.
\end{proof}
\begin{cor}
For any $ x\in\overline L_i, i\geq 2 $ which is not irreducible, there exist at least two ultrafilters $ x_i\in\overline L_i$ and $ x_j\in\overline L_j, i,j<\ n$,such that: $ x_i\mid_lx$, $ x_j\mid_rx$,and $ x_i\mid_mx$, $ x_j\mid_mx$.

\end{cor}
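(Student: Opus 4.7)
The plan is to reduce the statement directly to Corollary 2.12 and then read each divisibility relation off the resulting factorization by unpacking the definitions of $\mid_l$, $\mid_r$, and $\mid_m$ given in the introduction.

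Since $x \in \overline L_n$ with $n \geq 2$ is not irreducible, Corollary 2.12 produces ultrafilters $u \in \overline L_a$ and $v \in \overline L_b$ with $a, b < n$ such that $x = u v$. Writing $x = u v$ with the right-hand factor exposed gives $u \mid_r x$ (take the witness to be $v$ in the definition $x \mid_r y \iff y = x z$), and writing the same equation with the left-hand factor exposed gives $v \mid_l x$ (take the witness to be $u$ in $x \mid_l y \iff y = z x$). Relabelling these two factors as the $x_i, x_j$ of the statement gives the first two conclusions; one should just be careful that the definitions force the left factor of the product to be the right-divisor and vice versa.

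For the two mid-divisibility claims I would use that the principal ultrafilter at $1$ is the two-sided identity of $(\beta N,\cdot)$. The single factorization $x = u v$ can be rewritten as $x = 1 \cdot u \cdot v$ and as $x = u \cdot v \cdot 1$, which immediately witness $u \mid_m x$ and $v \mid_m x$ through the definition $x \mid_m y \iff y = z x w$.

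There is essentially no hard step inside this corollary itself: all of the substance sits in Corollary 2.12, which in turn rests on Theorem 2.10 (decomposing a non-irreducible element on a finite level as a product of two factors) and the level-additivity coming from Theorem 2.8(a) (ensuring the two factors land on strictly lower levels). Once those are in hand, everything needed for $\mid_l$, $\mid_r$, and $\mid_m$ is pure unpacking of definitions together with the insertion of the identity $1$.
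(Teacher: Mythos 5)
Your proposal is correct and matches the intended argument: the paper states this corollary without proof, leaving it as an immediate consequence of Corollary 2.12, and your unpacking of the definitions of $\mid_l$, $\mid_r$, and $\mid_m$ from the factorization $x=uv$ (including the careful observation that the left factor is the right-divisor and vice versa, and the insertion of the identity $1$ for mid-divisibility) is exactly what is needed.
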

The following theorem shows that the facts that for any $m\in L_m$ there is $n\in L_n$ where $n\leq m$ such that $n\mid m$, and for any $n\in L_n$ there is $m\in L_m$ such that $n\mid m$ can be transfered to  the $\tilde \mid$-divisibility on the ultrafilters that are on finite levels
\begin{thm}
(a) For every ultrafilter $ x \in \beta N \ {-} \overline {L_0 \cup L_1 \cup ...... \cup L_{n-1}} $ on finite level, there is an ultrafilter $ y \in \overline L_n $ such that $ y \tilde \mid x$ \newline 
(b) For any ultrafilter $ x \in \overline L_m $ , there exist an ultrafilter $ y \in \overline L_n, m \leqslant n $ such that $ x \tilde \mid y$  
\end{thm}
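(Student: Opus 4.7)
The plan is to prove both parts by constructing an appropriate function $f: N \to N$ and applying Lemma~1.3, which converts pointwise divisibility facts about $f$ into $\tilde{\mid}$-divisibility facts about $\tilde{f}(x)$.

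For part~(a), since $x$ is on a finite level and $x \notin \overline{L_0 \cup L_1 \cup \cdots \cup L_{n-1}}$, there is some unique $k \geq n$ with $x \in \overline{L_k}$, so $L_k \in x$. The idea is to strip off $k - n$ prime factors from each element of $L_k$. Concretely, for each $a \in L_k$ pick a canonical factorization $a = q_1(a) \, q_2(a) \cdots q_k(a)$ into (not necessarily distinct) primes listed in non-decreasing order, and define $f: N \to N$ by $f(a) = q_1(a) \cdots q_n(a)$ for $a \in L_k$ (and $f(a) = 1$ elsewhere, which will not matter since $L_k \in x$ and by the remark after Lemma~1.3 it suffices to define $f$ on a set in $x$). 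Then $f(a) \mid a$ for all $a \in L_k$, so Lemma~1.3(a) gives $\tilde{f}(x) \,\tilde{\mid}\, x$. Moreover $f(L_k) \subseteq L_n$, and since $L_k \in x$ implies $f(L_k) \in \tilde{f}(x)$, we get $L_n \in \tilde{f}(x)$, i.e.\ $\tilde{f}(x) \in \overline{L_n}$. So $y := \tilde{f}(x)$ is the desired ultrafilter.

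For part~(b), when $n = m$ take $y = x$ (since $A \subseteq \mid\! A$, the relation $x \,\tilde{\mid}\, x$ is immediate). When $n > m$, fix any primes $r_1, \ldots, r_{n-m} \in P$ and define $g: N \to N$ by $g(a) = a \cdot r_1 \cdots r_{n-m}$ for $a \in L_m$. Then $a \mid g(a)$ for all $a \in L_m$, and since $L_m \in x$, Lemma~1.3(b) yields $x \,\tilde{\mid}\, \tilde{g}(x)$. Because $g(L_m) \subseteq L_n$, we have $L_n \in \tilde{g}(x)$, hence $\tilde{g}(x) \in \overline{L_n}$, and $y := \tilde{g}(x)$ works.

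The main subtlety is just ensuring that $f$ (in part (a)) is well-defined: this requires choosing a canonical way to order the $k$ prime factors of each $a \in L_k$ so that ``the first $n$'' is meaningful. Any fixed convention (e.g.\ non-decreasing order) suffices, and no assertion about $f$ outside $L_k$ is needed because Lemma~1.3 only uses the values of $f$ on a set of $x$. Once $f$ is in hand, both parts are immediate applications of Lemma~1.3, and part (b) is strictly easier than part (a).
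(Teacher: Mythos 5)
Your proposal is correct and takes essentially the same route as the paper: both proofs construct a function $f$ defined on a set in $x$ that sends each number to a divisor (resp.\ multiple) of itself lying in $L_n$, and then invoke Lemma~1.3 to convert the pointwise relations $f(a)\mid a$ (resp.\ $a\mid f(a)$) into $\tilde f(x)\,\tilde\mid\,x$ (resp.\ $x\,\tilde\mid\,\tilde f(x)$) with $\tilde f(x)\in\overline{L_n}$. The only differences are cosmetic choices of the canonical divisor/multiple (the paper uses the smallest factor or multiple in $L_n$, you use the first $n$ primes of a fixed factorization or multiplication by fixed primes), which do not affect the argument.
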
 
\begin{proof}
(a)Let $ x\in\beta N-\overline{ L_0\cup\ L_1\cup......\cup\ L_{n-1}} $ is ultrfilter on finite level and let $ f:\ N -\ L_0 \cup L_1 \cup......\cup L_{n-1}\longrightarrow N $ is defined by $f(n)$ be the smallest factor of $n$ in $L_n$.So we have $ x\notin\overline {L_0\cup L_1\cup......\cup L_{n-1}}$ and $ L_0 \cup L_1 \cup ....... \cup L_{n-1} \notin x $ and we have $ N- L_0 \cup L_1 \cup ........ \cup L_{n-1} \in x, f(N- L_0 \cup L_1 \cup ....... \cup L_{n-1}) \in \tilde f (x) , $ and since $ f ( N-{{L_0}} \cup L_1 \cup ........ \cup L_{n-1}) \subseteq L_n , $ then we have $ L_n \in \tilde f (x) ,\newline \tilde f (x) \in \overline L_n .$ Since for any $n \in N- L_0 \cup L_1 \cup ....... \cup L_{n-1} $ by definition of the function $f$ we have $ f(n) \mid n. $ Then by (Lemma 1.3 (a)) we have $ \tilde f (x) \tilde \mid x$, $ \tilde f(x)\in\overline L_n $. \newline  
(b) Let $ x\in\beta N $, $ x\in\overline L_m $, and let  $ f : L_m \rightarrow N $ be defined by $ f (n) $ is the smallest multiple of $n$ in $L_n$ , so we have $ L_m \in x , f (L_m) \in \tilde f (x) $ , and since $ f (L_m) \subseteq L_n $ then we have $ L_n \in \tilde f (x) , \tilde f (x) \in \overline L_n $ . Since for any $ n \in L_m $ by definition of the function $f$ we have $ n \mid f(n) .$ Then by (Lemma1.3 (b)) we have $ x \tilde \mid \tilde f (x) $, $ \tilde f(x) \in\overline L_n $ \newline 
\end{proof}
\begin{cor}
(a) For any ultrafilter $ x\in\overline L_n $ there are ultrfilters $x_i\in\overline L_i $, $  i  \leqslant n-1 $ such that $ 1 \tilde\mid x_1......\tilde\mid x_{n-1} \tilde\mid x $ \newline 
(b) For any ultrafilter $ x_m\in\overline L_m $ there exist a sequence $ < x_n : x_n\in\overline L_n , n \geq m > $ such that $x_m\tilde\mid x_n$ 
\end{cor}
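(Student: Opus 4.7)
The plan is to deduce both parts by iterating Theorem 2.13, using that $\tilde\mid$ is transitive: if $x\tilde\mid y$ and $y\tilde\mid z$, then by Lemma 1.2(b) one has $x\cap\mu\subseteq y\cap\mu\subseteq z\cap\mu$, hence $x\tilde\mid z$.

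For (a), I would run a descending induction, setting $x_n:=x\in\overline L_n$. By Lemma 2.3(1) combined with Lemma 2.9, any $y\in\overline L_k$ automatically lies in $\beta N-\overline{L_0\cup L_1\cup\cdots\cup L_{k-2}}$ and is on finite level; hence Theorem 2.13(a), with its $n$ replaced by $k-1$, produces an ultrafilter $x_{k-1}\in\overline L_{k-1}$ with $x_{k-1}\tilde\mid x_k$. Iterating for $k=n,n-1,\ldots,2$ yields the chain $x_1\in\overline L_1,\ldots,x_{n-1}\in\overline L_{n-1}$ satisfying $x_i\tilde\mid x_{i+1}$ for $1\leq i\leq n-1$. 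The final link $1\tilde\mid x_1$ is trivial, since the only upward closed subset of $N$ containing $1$ is $N$ itself, so $1\cap\mu=\{N\}\subseteq x_1\cap\mu$.

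For (b), I would iterate Theorem 2.13(b). The construction in its proof adapts to any chosen target level $n\geq m$: define $f_n:L_m\to N$ so that $f_n(k)$ is a multiple of $k$ lying in $L_n$ (for instance, append $n-m$ additional prime factors), and set $x_n:=\tilde{f_n}(x_m)$. Then $f_n(L_m)\subseteq L_n$ forces $x_n\in\overline L_n$, and $k\mid f_n(k)$ for every $k\in L_m$ gives $x_m\tilde\mid x_n$ by Lemma 1.3(b). Collecting these $x_n$ for $n=m,m+1,m+2,\ldots$ produces the desired sequence. (Equivalently, one could chain the construction step by step via $x_n\tilde\mid x_{n+1}$ and invoke transitivity.)

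The only point needing care is verifying the hypothesis of Theorem 2.13(a) at each step of the descending induction in (a), but this is automatic from the disjointness of the levels and Lemma 2.9, so no real obstacle arises.
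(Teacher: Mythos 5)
Your proof is correct and follows essentially the same route as the paper: both parts are obtained by iterating the theorem you cite as 2.13 (numbered 2.14 in the paper) — part (a) by descending induction, using the disjointness of the levels and Lemma 2.9 to check the hypothesis $x_k\in\beta N-\overline{L_0\cup\cdots\cup L_{k-2}}$ at each step, and part (b) by running the smallest-multiple construction once for each target level $n\geq m$. The paper's own proof is a one-line citation of that theorem, so your version merely supplies the routine details it leaves implicit (the hypothesis check, transitivity of $\tilde\mid$ via Lemma 1.2, and the trivial link $1\tilde\mid x_1$), all of which you justify correctly.
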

\begin{proof}
(a)Let $ x\in\overline L_n $, then by (Theorem 2.14 (a)) there are ultrafilters \newline $x_i\in\overline L_i $ $ i\leqslant n-1 $ such that $ 1 \tilde \mid x_1......\tilde\mid x_{n-1} \tilde \mid x $ \newline
(b)Let  $ x_m \in \overline L_m $ then by (Theorem 2.14 (b))  there exist a sequence \newline $ <x_n : x_n \in \bar L_n , n \geq m > $ such that $ x_m \tilde \mid x_{m+1} \tilde \mid x _{m+2} \tilde \mid ..........$   
\end{proof}

\section{Ultrafiters that are not on finite levels}
In order to look for the ultrafilters $x$ that are not on finite levels($x\notin L_i, i=0,1,.....$),we use the facts that the set of all basic open sets $\mathcal{B}=\{\overline A : A\subseteq N\}$ is a base for the space $\beta N$ and $\{\overline L_i , i=0,1,......\}\subset \mathcal{B}$. From these we can find a basic open set $\overline A\in \mathcal{B}$ such that all nonprincipal ultrafilters $x\in A^*$ will not be elements in any basic open set $\overline L_i , i=0,1,......$  
 \begin{lemma}  
 (a) There are $2^c$ ultrafilters $x$ that are not on finite levels: i.e \newline $ x\notin \bigcup\limits_{i=0}^{\infty}\overline L_i$.\newline(b) There are $2^c$ irreducible ultrafilters $x$ that are not on finite levels.\newline(c)  $ \bigcup\limits_ {i=0} ^ {\infty }\overline L_i \neq\overline {\bigcup\limits_ {i=0} ^  {\infty}} L_i $  
\end{lemma}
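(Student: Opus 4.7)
The plan is to construct a single infinite ``sparse'' set $A \subseteq N$ that meets each level $L_i$ in only finitely many elements and then leverage it for all three parts.

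First I would take $A = \{2^i : i \geq 1\}$. For every $j \geq 0$ the intersection $A \cap L_j$ is either empty (if $j = 0$) or the singleton $\{2^j\}$, hence always finite. Since $A$ is infinite, $|A^*| = 2^c$ by ([5] Theorem 3.3). If $x \in A^*$ is nonprincipal and $L_j \in x$ for some $j$, then $A \cap L_j \in x$; being finite, this contradicts the fact that a nonprincipal ultrafilter on $N$ contains no finite set. Hence $L_j \notin x$ for every $j$, i.e.\ $x \notin \overline L_j$ for every $j$, which proves (a).

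For (b) I would invoke ([2] Proposition 7.4) in exactly the manner it was used inside the proof of Corollary 2.11(a): applied to the infinite set $A$, it provides an infinite subset $A' \subseteq A$ such that every element of $(A')^*$ is irreducible in $\beta N$. Then $|(A')^*| = 2^c$ by ([5] Theorem 3.3) once more, and because $A' \cap L_j \subseteq A \cap L_j$ is finite for each $j$, the argument of (a) applies verbatim to show $(A')^* \cap \bigcup_{i=0}^\infty \overline L_i = \emptyset$. This yields the required $2^c$ irreducible ultrafilters not on finite levels.

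For (c) I would observe that, by the fundamental theorem of arithmetic, $\bigcup_{i=0}^\infty L_i = N$, and therefore $\overline{\bigcup_{i=0}^\infty L_i} = \overline N = \beta N$. Part (a) exhibits ultrafilters of $\beta N$ outside $\bigcup_{i=0}^\infty \overline L_i$, so this union is a proper subset of $\beta N$ and the two sets differ. No step here presents a serious obstacle: the only nontrivial ingredient is Proposition 7.4 of [2], which the paper already uses, and the unifying idea is the elementary fact that a nonprincipal ultrafilter contains no finite set, so any sufficiently sparse infinite $A$ automatically forces $A^*$ to miss every $\overline L_j$.
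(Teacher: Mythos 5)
Your proposal is correct and follows essentially the same route as the paper: the paper's proof also builds a sparse set $A=\{n_0,n_1,n_2,\ldots\}$ with $n_i\in L_i$ (your $\{2^i\}$ is just a concrete instance of such a transversal), notes that $A\cap L_j$ is a singleton so every nonprincipal element of $\overline A$ misses all levels, counts $2^c$ such ultrafilters via [5, Theorem 3.3], applies [2, Proposition 7.4] to an infinite subset of $A$ for part (b), and deduces (c) from $\overline{\bigcup_i L_i}=\beta N$. No substantive differences.
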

\begin{proof}
Let $ A = \{ n_0 , n_1 , n_2 .........\} $ where $ n_i \in L_i $ . Any ultrafilter $x$ that is in finite level in $\overline A$ is principal , because, if $ x \in \overline L_i $ for some $i$ and $ x \in \overline A $ , Then $ L_i \in x $ and $ A \in x $ , so $ L_i \cap A \in x , $ but $ L_i \cap A = \{ n_i\} , \{ n_i\} \in x $ so $ x = \{ n_i\} $ is principal ultrafilter . Now, since $\overline A $ is closed and by ([5]Theorem 3.3 ) any closed subset of $ \beta N $ has finitely many or $ 2 ^c $ elements , so $ \overline A $ has $ 2 ^c $ elements. Thus $ \overline A $ has $ 2 ^ c$ nonprincipal ultrafilters that are not on finite levels.\newline (b)If we take A as in (a), then by ([2]proposition 7.4) there is infinite set B such that $B\subseteq A$ and all elements $x\in B^*$ are irreducible. \newline 
(c) by (a) there are $2 ^ c$ ultrafilters $x$ such that $ x \notin \overline L_i $ for all $i = 0 , 1 , 2 , .......$ so $ \bigcup\limits_ {i=0} ^ {\infty} \overline L_i \neq \beta N =\overline{\bigcup\limits_{i=0}^{\infty}} L_i $ 
\end{proof}
 In (Lemma 3-1 (a)) for any ultrafilter $x\in\beta N $ , $x\in A^*$, we have $L_i\notin x$ for all $i\in N$,so $N-L_i\in x$, $x\in\overline {N-L_i}$ and $x\in\bigcap\limits_ {i=0}^ {\infty} \overline {\rm N-L_i} $
\begin{defn}
$ I = \bigcap\limits_ {i=0} ^ { \infty} \overline { \rm N-L_i}  $ 
\end{defn} 
The ultrafilters that are belong to $I$ in (Definition (3-2)) are called the ultrafilters that are not on finite levels.
\begin{lemma}
(a) An ultrafilter $ x \in I $ if and only if $ x \notin \overline L_i $ for all $ i=0 , 1, 2 .... $ \newline 
(b) $ \beta N - I = \bigcup\limits_ {i=0} ^ { \infty} \overline L_i $  
\end{lemma}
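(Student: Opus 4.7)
The plan is to reduce both parts to the defining ultrafilter dichotomy: for any subset $A \subseteq N$ and any ultrafilter $x \in \beta N$, exactly one of $A$ and $N - A$ belongs to $x$, which is equivalent to saying $x \in \overline{A}$ iff $x \notin \overline{N - A}$.

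For part (a), I would unfold the definition of $I$. By Definition 3.2, $x \in I$ means $x \in \overline{N - L_i}$ for every $i = 0, 1, 2, \ldots$, which by the definition of the basic clopen sets means $N - L_i \in x$ for every $i$. Applying the ultrafilter dichotomy to each $L_i$, this is equivalent to $L_i \notin x$ for every $i$, i.e.\ $x \notin \overline{L_i}$ for every $i$. Running the chain of equivalences in both directions proves (a).

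For part (b), I would simply negate (a): $x \in \beta N - I$ iff it is not the case that $x \notin \overline{L_i}$ for all $i$, iff there exists some $i$ with $x \in \overline{L_i}$, iff $x \in \bigcup_{i=0}^{\infty} \overline{L_i}$. Hence the two sets are equal.

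There is no real obstacle here: the only subtlety is making sure we are using the correct clopenness property of $\overline{A}$, namely that $\beta N \setminus \overline{A} = \overline{N - A}$, which is immediate from the ultrafilter axioms. The proof is essentially a formal manipulation of the definitions of $I$, $\overline{A}$, and $\bigcap$/$\bigcup$, and should be quite short.
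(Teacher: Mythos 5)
Your proof is correct and follows essentially the same route as the paper: part (a) is the same chain of equivalences through the ultrafilter dichotomy ($N-L_i\in x$ iff $L_i\notin x$), and part (b) is obtained by negating (a), which is just the logical form of the De Morgan computation the paper carries out on the sets themselves (citing $\beta N-\overline{N-L_i}=\overline{L_i}$). No gaps; nothing further needed.
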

\begin{proof}
(a)  $ (\Rightarrow) $  Let $ x \in \beta N , x \in I , $ so $ x \in \overline {\rm N-L_i } $ for all $ i = 0 , 1 , 2 , ......, $ so $ N- L_i \in x , $ $ L_i \notin x $ . Thus $ x \notin \overline L_i $ for all $ i =0 , 1, 2 , ...... $ \newline $(\Leftarrow)$ Let $ x \notin \bar L_i $ for all $ i=0 , 1 , ......, $ so $ L_i \notin x , N-L_i \in x , $ so $ x \in \overline {\rm N-L_i}$ for all $ i=0, 1 , ..... .$ Thus $ x \in \bigcap \limits_ {i=0} ^ {\infty}\overline {\rm N-L_i} $\newline 
(b)By ([1]Lemma 3.17 (c)) we have
\begin{align*}
  \beta N- I &= \beta N - \bigcap\limits_ {i=0} ^ {\infty} \overline {\rm N- L_i}\\
 & = \bigcup\limits_ {i=0} ^ {\infty} \beta N - (\overline {\rm N-L_i})\\
 \end{align*}
 \begin{align*}  
 & = \bigcup\limits_ {i=0} ^{\infty} \beta N-(\beta N -\overline L_i)\\  
 & = \bigcup\limits_ {i=0} ^ {\infty} \overline L_i  
 \end{align*}
\end{proof} 
\begin{lemma}
If $ x \in I $ . Then \newline (a) $ x \notin \overline {\bigcup\limits_ {i=0} ^ {n}} L_i $ \newline (b) $ x \in \overline {\bigcup\limits_ {i=n} ^ {\infty}} L_i $  
\end{lemma}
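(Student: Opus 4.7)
The plan is to lean on Lemma 3.3(a), which characterizes $x \in I$ by the condition $L_i \notin x$ for every $i = 0, 1, 2, \ldots$. This reduces both assertions to elementary ultrafilter bookkeeping on $N$.

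For (a), I would translate $x \in \overline{\bigcup_{i=0}^{n} L_i}$ into $\bigcup_{i=0}^{n} L_i \in x$. Because $x$ is an ultrafilter, hence a prime filter, a finite union can lie in $x$ only if one of its pieces already does; but by Lemma 3.3(a) no $L_i$ belongs to $x$, contradiction. An equally short route is to invoke the identity $\overline{\bigcup_{i=0}^{n} L_i} = \bigcup_{i=0}^{n} \overline{L_i}$ established just before Theorem 2.10, combined with $x \notin \overline{L_i}$ for each $i \le n$.

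For (b), I would exploit that $\{L_0, L_1, L_2, \ldots\}$ partitions $N$: every positive integer is either $1 \in L_0$ or else a product of exactly $k \ge 1$ primes counted with multiplicity, the value of $k$ being unique. Consequently
\[
\bigcup_{i=n}^{\infty} L_i \;=\; N \setminus \bigcup_{i=0}^{n-1} L_i .
\]
By the argument of part (a) the set $\bigcup_{i=0}^{n-1} L_i$ does not belong to $x$, and since $x$ is an ultrafilter its complement $\bigcup_{i=n}^{\infty} L_i$ must belong to $x$, which is precisely $x \in \overline{\bigcup_{i=n}^{\infty} L_i}$.

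Neither direction is subtle; the only care needed is the disjoint-partition fact about the levels $L_i$, which lets one convert the ``low levels are excluded'' statement of (a) into the ``high levels are captured'' statement of (b) by complementation. If any obstacle arises it will be purely notational, namely keeping straight that the union in (b) starts at $i = n$ while the complement naturally stops at $i = n-1$.
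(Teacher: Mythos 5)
Your proof is correct and follows essentially the same route as the paper: part (a) is exactly the paper's argument (reduce via $\overline{\bigcup_{i=0}^{n}L_i}=\bigcup_{i=0}^{n}\overline{L_i}$ from Lemma 2.9 and use $x\notin\overline{L_i}$), and part (b) rests on the same two facts the paper uses, namely that the $L_i$ partition $N$ and that an ultrafilter containing a union must contain one piece. The only cosmetic difference is that you pass to the complement of $\bigcup_{i=0}^{n-1}L_i$ in one step, whereas the paper peels off $L_0, L_1,\ldots,L_{n-1}$ from $\bigcup_{i=0}^{\infty}L_i\in x$ one level at a time; these are the same argument.
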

\begin{proof}
(a) Let $ x \in I , $ then $ x \notin \bigcup\limits_ {i=0} ^ {n} \overline L_i $ and by (Lemma 2-9) we have \newline $ x \notin \overline {\bigcup\limits_ {i=0} ^ {\ n} } L_i $ \newline (b) Since $ x \in \overline {\bigcup\limits_ {i=0} ^ {\infty} } L_i , $ so $ \bigcup\limits_ {i=0} ^ {\infty}	L_i \in x, L_0 \cup (\bigcup\limits_ {i=0} ^ {\infty} L_i) \in x , $ so $ \bigcup\limits_ {i=0}^ {\infty} L_i \in x. $ Again $ L_1 \cup (\bigcup\limits_ {i=2} ^ {\infty} L_i) \in x , $ and we have $ \bigcup\limits_ {i=2} ^ {\infty} L_i \in x . $ If we continuous in this we will get $\bigcup\limits_ {i=n} ^ {\infty} L_i \in x . $ Thus $ x \in \overline {\bigcup\limits_ {i=n }^ { \infty} }L_i $  
\end{proof}
In particular, any union of infinite  elements $L_i, i=0,1,2,......$ is element in any ultrafilter $x\in I$. This fact leads us to prove that the elements in $I$ are $\tilde \mid$-divisible by elements of any finite level $\overline L_i, i=0,1,......$, as the following theorem shows.
\begin{thm}
 (a) For any ultrafilter $ x \in I $, there exist an ultrafilter $ y\in \overline L_n $ such that $ y \tilde \mid x $ \newline (b) For any ultrafilter $ x \in \ L_n^* $ there exist an ultrafilter $ y \in I $ such that $ x \tilde \mid y $ \newline (c) There exist an ultrafilter $ x \in I $ divided by an ultrafilter $ y \in I $, ($ y\tilde\mid x $). 
\end{thm}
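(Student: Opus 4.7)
The plan is to prove each of the three parts by choosing an appropriate function $f$ (defined on some set in the relevant ultrafilter) and applying Lemma 1.3, combined with the characterization $x\in I$ iff $L_k\notin x$ for every $k$ given by Lemma 3.3(a).

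For (a), Lemma 3.4(b) guarantees that $A:=\bigcup_{i\ge n}L_i$ lies in every $x\in I$. I would define $f$ on $A$ by letting $f(m)$ be the smallest divisor of $m$ that lies in $L_n$; this exists because each $m\in A$ is a product of at least $n$ primes counted with multiplicity, so any $n$ of them yield such a divisor. Since $f(m)\mid m$ on $A\in x$, Lemma 1.3(a) gives $\tilde f(x)\,\tilde\mid\,x$, while $f(A)\subseteq L_n$ forces $y:=\tilde f(x)\in\overline{L_n}$.

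For (b), given $x\in L_n^*$ I would define $f$ on $L_n$ by $f(m)=m\cdot q_1q_2\cdots q_m$, where $q_1,\dots,q_m$ are distinct primes not dividing $m$, so that $f(m)\in L_{n+m}$. Since $m\mid f(m)$, Lemma 1.3(b) yields $x\,\tilde\mid\,\tilde f(x)$. To verify $y:=\tilde f(x)\in I$ I need $L_k\notin y$ for every $k$, and this follows because $f^{-1}(L_k)$ equals $\{k-n\}$ (when $k-n\in L_n$) or $\emptyset$, and a single point cannot belong to the nonprincipal ultrafilter $x$.

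For (c), I would pick any $y\in I$ (which exists by Lemma 3.1(a)) and set $x:=y^2=\widetilde{pow_2}(y)$ in the notation of Definition 2.6(2). Since $m\mid m^2$ for every $m$, Lemma 1.3(b) immediately gives $y\,\tilde\mid\,x$. To confirm $x\in I$ I would compute $pow_2^{-1}(L_k)$: for odd $k$ it is empty, and for $k=2j$ it equals $L_j$ since $m^2\in L_{2j}$ precisely when $m\in L_j$. As $y\in I$ we have $L_j\notin y$ for every $j$, so $L_k\notin x$ for every $k$, whence $x\in I$.

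The main obstacle is (c): the truncation and padding constructions used in (a) and (b) either force the image into a fixed $\overline{L_n}$ or make the image levels depend on single preimages, and neither tactic preserves membership in $I$. The key is to choose $f$ so that levels \emph{scale} under $\tilde f$, and squaring does exactly this, multiplying levels by two, so the hypothesis that $y$ avoids every finite level transfers directly to $y^2$.
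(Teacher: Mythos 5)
Your proof is correct, and while (a) matches the paper, (b) and especially (c) take a different route. Part (a) is essentially the paper's own argument: the same map (smallest divisor lying in $L_n$, defined on $\bigcup_{i\ge n}L_i\in x$, which is legitimate by Lemma 3.4(b)) together with Lemma 1.3(a). In (b) the underlying idea coincides with the paper's --- push $L_n$ into a set meeting each level in at most one point --- but your padding function $f(m)=m\,q_1\cdots q_m$ is more explicit: it lets you read off $\tilde f(x)\in I$ directly from the fact that $f^{-1}(L_k)\cap L_n$ is at most a singleton and $x$ is nonprincipal, whereas the paper first constructs an auxiliary set $A=\{m_n,m_{n+1},\dots\}$ with $m_i\in L_i$ such that every element of $L_n$ has a multiple in $A$, and maps each element to its smallest multiple there. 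Part (c) is where you genuinely diverge: the paper exhibits a single concrete pair, taking $x\in A_1^{*}$ for $A_1=\{3\cdot 2^{n}:n\in N\}$ and $y=\tilde f(x)\in \overline{A_2}$ for $A_2=\{2^{n}:n\in N\}$ via $f(3\cdot 2^{n})=2^{n}$ and Lemma 1.3(a); you instead start from an \emph{arbitrary} $y\in I$ and take $x=y^{2}=\widetilde{pow_2}(y)$, using that squaring doubles levels so that $pow_2^{-1}(L_k)$ is $L_{k/2}$ or empty, hence $I$ is preserved. This buys a strictly stronger conclusion (every element of $I$ $\tilde\mid$-divides some element of $I$) at the cost of needing Definition 2.6(2). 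One small point you should add: as literally stated, (c) is satisfied trivially by $y=x$, so the content is in producing $y\neq x$; the paper gets this for free since $\overline{A_1}$ and $\overline{A_2}$ are disjoint, while in your construction $y^{2}\neq y$ requires a one-line check (for instance, $\widetilde{pow_2}(y)=y$ would force the fixed-point set $\{1\}$ of $pow_2$ to lie in $y$, contradicting that $y$ is nonprincipal).
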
 
\begin{proof} 
(a) Let $ x \in \beta N , x \in I , $ so by (Lemma 3-4 (b)) we have 
 $ x \in \overline {\bigcup\limits_ {i=n} ^ {\infty}}   L_i $ and  $ \bigcup\limits_ {i=n} ^ {\infty} L_i \in x .$ Let $ f : \bigcup\limits_ {i=n} ^ { \infty} L_i \rightarrow N $ is defined by $ f(n) $
 is the smallest factor of $n$ in $ L_n $ , so $ f (\bigcup\limits_ {i=n} ^ {\infty} L_i) \subset L_n , $ and since $ f (\bigcup\limits_ {i=1}^ {\infty} L_i ) \in \tilde f (x) ,$ so $ L_n \in \tilde f (x) , \tilde f (x) \in \overline L_n .$ Therefore by (Lemma 1.3 (a)) $ \tilde f (x) \tilde \mid x , \tilde f(x) \in \overline L_n $ \newline \newline (b) Let $ x\in\beta N $, $ x\in \overline L_n $ and let $ A = \{ m_n , m_{n+1} , ........\} \subset N $ where $m_i \in L_i , i \geq n $ such that for any $ n \in L_n $ has multiple in $A$ . Let $ f: L_n \rightarrow N $ be surjective function and it is defined by $f(n)$ is the smallest multiple of $n$ in $A$. If $ x \in\ L_n^* $ , so $ L_n \in x $ ,$ f(L_n ) \in \tilde f (x) $ , and since $ f (L_n) \subset A $ ,  so $ A \in \tilde f (x) , \tilde f (x) \in A ^*$.[ ${\tilde f(x)\in\ I} $, because $x$ is nonprincipal ultrafilter,so any element of $x$ is infinite subset of $N$, since $f$ is injective, then also any subset of $\tilde f(x)$ is infinite subset of $N$.Thus $\tilde f(x) $ is nonprincipal ultrafilter, so by definition of $\tilde f(x) $ and (Lemma 3-1 (a)) $ \tilde f(x)\in\ I $].Therefore by (Lemma 1.3 (b)) $ x\tilde \mid \tilde f(x) $, $ \tilde f(x)\in I $.\newline \newline(c) Let $ x \in\beta N $ and let $ A_1=\{3.2^n :n=1,2,......\} $, $ A_2=\{2^n : n=1,2,......\}$, and $ x \in\ A_1^* $ ,so $A_1\in\ x $, and by (Lemma 3-1 (a)) we have $ x\in\ I $. Let $ f: A_1\rightarrow N $ is defined by $ f(3.2^n) =2^n $, $ n=1,2,......$ $f(A_1)\in\tilde f(x) $ and since $f(A_1)\subset A_2 $, so $A_2\in\tilde f(x) $, $ \tilde f(x)\in\overline A_2 $.Then similar to analogous in (c) we have $\tilde f(x)\in\ I $. Therefore, by (Lemma 1.3 (a) ) we have $\tilde f(x)\tilde\mid\ x $, $\tilde f(x)\in\ I $.
\end{proof}
\begin{cor}
 For any ultrafilter $x\in I $, there exist a sequence \newline $< x_n : n\in N > $ of ultrafilters such that $ x_n\in\overline L_n $ and $ x_1\tilde\mid x_2......\tilde\mid x$
\end{cor}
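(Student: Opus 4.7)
My plan is to construct the entire sequence simultaneously from one family of ``smallest factor'' functions, so that both the links $x_n\tilde\mid x_{n+1}$ and the final relations $x_n\tilde\mid x$ fall out of a single construction in the spirit of the proof of Theorem 3.5(a). Given $x\in I$, Lemma 3.4(b) gives $\bigcup_{i\ge n}L_i\in x$ for every $n\in N$, so for each such $n$ I define
\[
f_n\colon \bigcup_{i\ge n}L_i \longrightarrow N,\qquad f_n(m)=\text{the smallest divisor of }m\text{ lying in }L_n,
\]
namely the product (with multiplicity) of the $n$ smallest prime factors of $m$. Set $x_n:=\tilde f_n(x)$.

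\textbf{Verification of $x_n\in\overline L_n$ and $x_n\tilde\mid x$.} Since $f_n$ takes values in $L_n$, we get $L_n\in\tilde f_n(x)$, so $x_n\in\overline L_n$. Because $f_n(m)\mid m$ throughout the domain, Lemma 1.3(a) gives $x_n\tilde\mid x$. The only remaining task is to chain together the $x_n$'s.

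\textbf{The chain links $x_n\tilde\mid x_{n+1}$.} Define $h_n\colon L_{n+1}\to L_n$ by letting $h_n(b)$ be the smallest divisor of $b$ in $L_n$; clearly $h_n(b)\mid b$. The crucial identity is
\[
h_n\circ f_{n+1}=f_n \qquad \text{on} \qquad \bigcup_{i\ge n+1}L_i,
\]
after which functoriality of the Stone-Cech extension (applied on a set in $x$) yields
\[
\tilde h_n(x_{n+1})=\tilde h_n(\tilde f_{n+1}(x))=\widetilde{h_n\circ f_{n+1}}(x)=\tilde f_n(x)=x_n,
\]
and Lemma 1.3(a) then delivers $x_n\tilde\mid x_{n+1}$, completing the chain $x_1\tilde\mid x_2\tilde\mid x_3\tilde\mid\cdots\tilde\mid x$.

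\textbf{Main obstacle.} The one non-routine point is the identity $h_n\circ f_{n+1}=f_n$. This reduces to a short combinatorial observation: listing the prime factors of $m$ in non-decreasing order with multiplicity as $q_1\le q_2\le\cdots\le q_N$ (with $N\ge n+1$), one has $f_{n+1}(m)=q_1q_2\cdots q_{n+1}$, whose smallest divisor in $L_n$ is obtained by dropping its largest prime factor, giving $q_1q_2\cdots q_n=f_n(m)$. Everything else is a straightforward invocation of Lemma 1.3(a) and Lemma 3.4(b).
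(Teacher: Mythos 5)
Your proof is correct, and it takes a more self-contained route than the paper's. The paper's own proof is a two-line citation: it invokes Theorem 3.5(a) (mislabeled there as 3.5(b)) to get, for each $n$, some $x_n\in\overline{L_n}$ with $x_n\tilde\mid x$, and Theorem 2.14(a) to get, for each such $x_n$, some element of $\overline{L_{n-1}}$ dividing it. As written this leaves a coherence issue unaddressed: the level-$(n-1)$ witness extracted from $x_n$ via Theorem 2.14(a) need not coincide with the level-$(n-1)$ witness extracted directly from $x$ via Theorem 3.5(a), so the two citations do not by themselves assemble into a single chain $x_1\tilde\mid x_2\tilde\mid\cdots\tilde\mid x$ indexed by all of $N$. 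Your construction resolves exactly this point: defining every $x_n=\tilde f_n(x)$ from one compatible family of smallest-factor maps and proving the identity $h_n\circ f_{n+1}=f_n$ forces $\tilde h_n(x_{n+1})=x_n$, so the links $x_n\tilde\mid x_{n+1}$ and the relations $x_n\tilde\mid x$ hold for one and the same sequence. The combinatorial verification is sound (the smallest $L_{n+1}$-divisor of $m$ is the product of its $n+1$ smallest prime factors counted with multiplicity, and dropping the largest of these yields the smallest $L_n$-divisor), and the appeals to Lemma 3.4(b), Lemma 1.3(a), and the compatibility of the extension operation $f\mapsto\tilde f$ with composition on a set in the ultrafilter are all legitimate given the paper's remark following Lemma 1.3. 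The cost is length; the gain is that the chain is actually exhibited rather than asserted.
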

\begin{proof}
  By (Theorem 3-5 (b)) for any $ x\in I $ there exist an ultrafilter \newline $x_n\in\overline L_n $ for any finite level such that $ x_n\tilde\mid x $,and by (Thmeorem 2-14 (a)) for any $ x_n\in\overline L_n $ there exist an ultrafilter $ x_{n-1}\in\overline L_{n-1} $ such that $ x_{n-1}\tilde\mid\ x_n $.Therefore, there exist a sequence $< x_n : n\in N >$ such that $ x_1\tilde\mid x_2......\tilde\mid x $  
\end{proof}
 
\begin{thm}
(a) If $x, y \in \beta N $ and $ x , y \in I $ . Then $ x y \in I $ and $ y x \in I $ \newline (b) If $ x , y \in \beta N $ and $ x \in I , y \notin I $ . Then $ x y \in I $ and $ y x \in I $ . 
\end{thm}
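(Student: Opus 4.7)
The plan is to prove both parts uniformly from a single application of Lemma 1.1 together with a combinatorial description of $L_n/m$. By Lemma 3.3(a), an ultrafilter $z$ lies in $I$ precisely when $L_i \notin z$ for every $i \geq 0$, equivalently when $N - L_i \in z$ for every such $i$. So the goal in each subcase reduces to showing $L_n \notin xy$ and $L_n \notin yx$ for every $n \in N$. The first observation I would isolate is the set-divisor formula: if $m \in L_k$ with $0 \leq k \leq n$, then $L_n/m = L_{n-k}$, while if $m$ has more than $n$ prime factors (counted with multiplicity), then $L_n/m = \emptyset$. This is immediate from the definition of $L_n$ and the additivity of the number of prime factors under multiplication.

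For the assertion $xy \in I$, I would argue by contradiction: suppose $L_n \in xy$ for some $n$. By Lemma 1.1 there exist $B \in x$ and an indexed family $\langle C_m \rangle_{m \in B}$ in $y$ with $\bigcup_{m \in B} m C_m \subseteq L_n$, so $C_m \subseteq L_n/m$ for every $m \in B$. Because $y$ is an ultrafilter each $C_m$ is nonempty, hence by the set-divisor formula $m \in L_0 \cup L_1 \cup \cdots \cup L_n$ for every $m \in B$. Thus $L_0 \cup L_1 \cup \cdots \cup L_n \in x$, but $x \in I$ forces $N - L_i \in x$ for $0 \leq i \leq n$, and intersecting yields $\emptyset \in x$, a contradiction. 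This argument uses only the hypothesis $x \in I$, so it simultaneously handles both (a) and (b).

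For the assertion $yx \in I$, I would again suppose $L_n \in yx$ and apply Lemma 1.1 to obtain $B \in y$ and an indexed family $\langle C_m \rangle_{m \in B}$ in $x$ with $\bigcup_{m \in B} m C_m \subseteq L_n$. For each $m \in B$ one has $C_m \subseteq L_n/m$, and since $C_m \in x$ is nonempty, $m \in L_{k_m}$ for some $0 \leq k_m \leq n$ and $C_m \subseteq L_{n - k_m}$. But then $L_{n - k_m} \in x$, contradicting $x \in I$. Once more, only $x \in I$ is used, so both (a) and (b) follow.

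The main obstacle is not any single deep step but the bookkeeping around $L_n/m$: one must carefully justify that the total number of prime factors with multiplicity is additive under multiplication, so that $L_n/m$ is either a single $L_j$ with $j = n-k \geq 0$ or empty, with no other possibilities. After this lemma is pinned down, the two multiplications $xy$ and $yx$ reduce to essentially the same extraction: pull a set from Lemma 1.1 into $x$ (directly in the $xy$ case, or through the fibers $C_m$ in the $yx$ case) and contradict $x \in I$.
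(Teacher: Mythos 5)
Your proof is correct, and it takes a genuinely different route from the paper's. The paper argues by contradiction via Theorem 2.7: if $xy\in\overline{L_n}$ then $F_\alpha\subseteq xy$ for some $\alpha$ with $\sigma(\alpha)=n$, and it then analyzes the quotient sets $A/r$ of a member $A\in F_\alpha$ to conclude that $y\in\overline{L_i}$ and $x\in\overline{L_j}$ with $i+j=n$, contradicting $x,y\in I$; part (b) is dismissed as ``similar to (a).'' You bypass the $F_\alpha$ machinery entirely and work from Lemma 1.1 together with the elementary identity $L_n/m=L_{n-\Omega(m)}$ (empty when $\Omega(m)>n$), where $\Omega$ counts prime factors with multiplicity. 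This buys two things. First, your argument is tight at exactly the point where the paper is loose: the paper in effect asserts that $\{r: A/r\in y\}$ lies in a single level $L_j$, which need not hold a priori since those $r$ can be spread over several levels; you handle this correctly by observing only that $B\subseteq L_0\cup\cdots\cup L_n$ and then intersecting the finitely many sets $N-L_i\in x$, $i\le n$, to land $\emptyset$ in $x$. Second, you isolate the fact that only one factor needs to lie in $I$: if $x\in I$ then both $xy$ and $yx$ avoid every $\overline{L_n}$ regardless of $y$, so (a) and (b) genuinely collapse to a single computation rather than (b) being left as an analogous case. The one step you should write out in full is the claim that $L_n/m$ is either a single $L_{n-k}$ or empty, which, as you note, follows from additivity of $\Omega$ under multiplication and the fact that the sets $L_k$ partition $N$.
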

\begin{proof}
	(a) If we assume that $ x y \notin I$,then  $x y \in \overline L_n $ for some $ n\in N $.So by (Theorem 2.7) we have $ F_\alpha \subseteq x y $ for some $ \alpha \in \mathcal {A} $ such that  $\sigma (\alpha) = n $ where \newline  $ \alpha = \{ ({p_1}^{ k_1} , n_1) , ({p_2} ^ {k_2} , n_2) , ...... ({p_m} ^ {k_m} , n_m) \} , n = \displaystyle \sum_ {i=1} ^ {m} k_i n_i$, \newline $ F_\alpha = \{ ({{A_1}^{k_1}}) ^ {(n_1)} \  ({{A_2} ^ {k_2}})^ {(n_2)} ({{A_m} ^ {k_m}}) ^ {(n_m)} : A_i \in p_i \upharpoonright  P , A_i \cap A_j = \emptyset \ if \\ p_i \neq p_j \} \subseteq x y $ \newline\newline so $ ({{A_1} ^ {k_1}}) ^ {(n_1)}  ({{A_2} ^ {k_2}}) ^ {(n_2)} ...... ({{A_m} ^ {k_m}})^{(n_m)} \in x y ,$ and we have\newline \newline  $ \{ r \in  N : ({{A_1} ^{k_1}})^ {(n_1)}\}  ({{A_2} ^ {k_2}})^ {(n_2)} ....... ({{A_m} ^ { k_m}}) ^ {(n_m)} / r  \in y \} \in x $ , but \newline \newline  $ ({{A_1} ^ { k_1}}) ^ {(n_1)}  ({{A_2} ^{k_2}}) ^ {(n_2)} .... ({{A_m} ^ {k_m}}) ^ {(n_m)}  / r = \{ s \in N : r s \in ({{A_1}^ {k_1}}) ^ {(n_2)}  .... ({{A_m} ^ { k_m}}) ^ {(n_m )}\} $ \newline \newline so $ ({{A_1} ^ {k_1}}) ^ {(n_1)}  ({{A_2} ^ {k_2}}) ^ {(n_2)}...... ({{A_m} ^{k_m}}) ^ {(n_m)} / r \subseteq L_i $, \newline \newline and $ \{ r \in N : ({{A_1} ^ {k_1}}) ^ {(n_2)} ..... ({{A_m} ^ {k_m}}) ^ {( n_m)} / r \in y \} \subseteq L_j $ where $ i , j \leq n $ such that $ L_i . L_j = L_{i+j} = L_n, $. So $ L_i \in y , y \in \overline L_i $ for some $ i \leq n $ and $ L_j \in x , x \in \overline L_j $ for some $ j \leq n $, so we have a contradiction. Thus $ x y \in I. $ Same analogues for prove $ y x \in I $ \newline \newline
	(b) Similar to (a)    
	\end{proof}
\begin{cor}
For any $ n \in N$ there exist $ x\in I $ such that $ n\tilde \mid x$.
\end{cor}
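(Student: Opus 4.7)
The plan is to reduce the statement to finding an ultrafilter $x\in I$ with $nN\in x$, where $nN=\{kn:k\in N\}$. This reduction follows by unpacking the definition of $\tilde{\mid}$-divisibility applied to the principal ultrafilter $n$: every $A\in n$ satisfies $n\in A$, and hence $\mid\! A\supseteq \,\mid\!\{n\}=nN$, so the required condition for all such $A$ reduces to the single membership $nN\in x$. Conversely, if $nN\in x$ then $\mid\! A\in x$ for every $A\ni n$, so $n\tilde{\mid}x$.

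To produce such an $x$, I would use the compactness of $\beta N$. Consider the family $\mathcal{F}=\{nN\}\cup\{N\setminus L_i:i\geq 0\}$ of subsets of $N$. This family has the finite intersection property: if $k$ denotes the number of prime factors of $n$ counted with multiplicity, then $nN$ contains infinitely many elements of $L_j$ for every $j\geq k$ (for instance the set $\{n\cdot p_1\cdots p_{j-k}:p_i\in P\}$), so for any finite list of indices $i_1,\dots,i_s$ the set $nN\setminus(L_{i_1}\cup\cdots\cup L_{i_s})$ is infinite. Extending $\mathcal{F}$ to an ultrafilter $x\in\beta N$ yields $nN\in x$ and $L_i\notin x$ for every $i\geq 0$, so by Lemma 3.3(a) $x\in I$, and by the reduction above $n\,\tilde{\mid}\,x$.

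A shorter route, more in the spirit of the previous results, is to invoke Theorem 3.5(b). Let $k=\Omega(n)$; then $nN\cap L_{k+1}=\{np:p\in P\}$ is infinite, so it supports a nonprincipal ultrafilter $y\in L_{k+1}^{*}$ with $nN\in y$, giving $n\,\tilde{\mid}\,y$. Theorem 3.5(b) then furnishes $x\in I$ with $y\,\tilde{\mid}\,x$; since $\tilde{\mid}$ is transitive (immediate from Lemma 1.2, via $n\cap\mu\subseteq y\cap\mu\subseteq x\cap\mu$), we conclude $n\,\tilde{\mid}\,x$ with $x\in I$.

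There is no serious obstacle here. The only point that requires genuine care, rather than being routine, is the reduction step $n\,\tilde{\mid}\,x\Leftrightarrow nN\in x$, which depends on the observation that $\{n\}$ is the minimal element of $n\cap \mu$'s generating structure when $n$ is principal; everything else is either compactness or a direct appeal to results already in the paper.
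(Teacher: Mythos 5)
Your argument is correct, but it takes a genuinely different route from the paper. The paper's proof is a two-line application of Theorem 3.7: pick any $y\in I$, set $x=ny$; Theorem 3.7(b) gives $ny\in I$, and since $ny$ is by construction divisible by $n$ in the semigroup sense, $n\tilde{\mid}\,ny$ follows. You instead isolate the clean equivalence $n\,\tilde{\mid}\,x\Leftrightarrow nN\in x$ for principal $n$ (which is right: $\{n\}\in n$ forces $nN\in x$, and conversely every $\mid\!A$ with $n\in A$ contains $nN$) and then build $x$ directly by extending the family $\{nN\}\cup\{N\setminus L_i:i\geq 0\}$, whose finite intersection property you verify by noting $nN$ meets $L_j$ for every $j\geq\Omega(n)$. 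Your construction is more elementary and self-contained --- it avoids the semigroup operation and Theorem 3.7 entirely, needing only compactness and the characterization of $I$ --- whereas the paper's proof is shorter and exploits the multiplicative closure of $I$ already established. One cosmetic point: for $j=\Omega(n)$ the intersection $nN\cap L_j$ is just $\{n\}$, not infinite, but this does not affect the finite intersection property since infinitely many higher levels still contribute. Your alternative route via Theorem 3.5(b) and transitivity of $\tilde{\mid}$ (from Lemma 1.2) is also valid and closer in spirit to the paper's earlier results, though still distinct from the proof actually given.
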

\begin{proof}
Let $ n\in N $ and $y\in I$, then by (Theorem 3.7)we have $ ny \in I$, and  since $n\mid ny$ then $n\tilde\mid ny , ny\in I $.
\end{proof}

\end{document}